\newcommand \ch[1]{{\check{#1}}}
\newcommand \bb[1]{{\mathbb #1}}
\newcommand \bC{{\bb C}}
\newcommand \bN{{\bb N}}
\newcommand \bR{{\bb R}}
\newcommand \bZ{{\bb Z}}
\newcommand\Hom{\operatorname{Hom}}
\newcommand\rk{\operatorname{rk}}
\newcommand\Ann{\operatorname{Ann}}
\newcommand\half{\frac{1}{2}}
\newcommand{\Kt}{\widetilde{K}}
\newcommand{\Et}{\widetilde{E}}
\DeclareMathOperator\Ass{Ass}
\def\gr{\mathop{\hbox {gr}}\nolimits}
\DeclareMathOperator\Dim{Dim}
\def\dim{\mathop{\hbox {dim}}\nolimits}
\def\ch{\mathop{\hbox {ch}}\nolimits}
\def\Hom{\mathop{\hbox {Hom}}\nolimits}
\DeclareMathOperator\Ime{Im}
\DeclareMathOperator\im{Im}
\def\ker{\mathop{\hbox{Ker}}\nolimits}
\DeclareMathOperator\Ker{Ker}
\def\Spin{\mathop{\hbox {Spin}}\nolimits}
\def\Cas{\mathop{\hbox {Cas}}\nolimits}
\newcommand{\pf}{\begin{proof}}
\newcommand{\epf}{\end{proof}}
\newcommand{\eq}{\begin{equation}}
\newcommand{\eeq}{\end{equation}}
\newcommand{\eqn}{\begin{equation*}}
\newcommand{\eeqn}{\end{equation*}}
\newcommand{\fra}{\mathfrak{a}}
\newcommand{\frb}{\mathfrak{b}}
\newcommand{\frg}{\mathfrak{g}}
\newcommand{\frh}{\mathfrak{h}}
\newcommand{\frk}{\mathfrak{k}}
\newcommand{\frl}{\mathfrak{l}}
\newcommand{\frn}{\mathfrak{n}}
\newcommand{\fro}{\mathfrak{o}}
\newcommand{\frp}{\mathfrak{p}}
\newcommand{\frq}{\mathfrak{q}}
\newcommand{\frs}{\mathfrak{s}}
\newcommand{\frt}{\mathfrak{t}}
\newcommand{\fg}{\mathfrak{g}}
\newcommand{\bbC}{\mathbb{C}}
\newcommand{\bbN}{\mathbb{N}}
\newcommand{\bbR}{\mathbb{R}}
\newcommand{\bbZ}{\mathbb{Z}}
\newcommand{\caV}{\mathcal{V}}
\newcommand{\caO}{\mathcal{O}}
\newcommand{\be}{\begin{equation}}
\newcommand{\beu}{\begin{equation*}}
\newcommand{\defn}{\overset{\text{def.}}=}
\newcommand{\tr}{\operatorname{tr}}
\newcommand{\sgn}{\operatorname{sgn}}
\newtheorem{conj}[equation]{Conjecture}
\newtheorem{thm}[equation]{Theorem}
\newtheorem{cor}[equation]{Corollary}
\newtheorem{prop}[equation]{Proposition}
\newtheorem{ex}[equation]{Example}
\numberwithin{equation}{section}
\begin{document}

\bigskip
\title[Translation principle for Dirac index]{Translation principle for Dirac index}

\author{Salah Mehdi, Pavle Pand\v zi\'c and David Vogan}

\address{Institut Elie Cartan de Lorraine, CNRS - UMR 7502, Universit\'e de Lorraine, Metz, F-57045, France.}
\email{salah.mehdi@univ-lorraine.fr}
\address{Department of Mathematics, University of Zagreb, Zagreb, Croatia.}
\email{pandzic@math.hr}
\address{Department of Mathematics, Massachusetts Institute of Technology, Cambridge, MA 02139, U.S.A.}
\email{dav@math.mit.edu}

\keywords{$(\frg,K)$-module, Dirac cohomology, Dirac index, coherent family, coherent continuation representation, 
Goldie rank polynomial, nilpotent orbits, associated variety, Springer correspondence}
\subjclass[2010]{Primary 22E47; Secondary 22E46}
\thanks{The first and the second named authors were partially supported by a bilateral French-Croatian grant, PHC-Cogito 28353VL. 
The second named author was partially supported by a grant from Ministry of science, education and sport of Republic of Croatia. 
The third named author was supported in part by NSF grant DMS 0967272.}

\begin{abstract}
Let $G$ be a finite cover of a closed connected transpose-stable 
subgroup of $GL(n,\bR)$ with complexified Lie algebra $\frg$.     
Let $K$ be a maximal compact subgroup of $G$, and assume that $G$ and 
$K$ have equal rank. We prove a translation principle for the Dirac 
index of virtual $(\frg,K)$-modules. As a byproduct, to each coherent 
family of such modules, we attach a polynomial on the dual of the
compact Cartan subalgebra of $\frg$. This ``index polynomial''
generates an irreducible representation of the Weyl group contained in the coherent
continuation representation. We show that the index polynomial is the
exact analogue on the compact Cartan subgroup of King's character
polynomial. The character polynomial was defined in \cite{K1} on the
maximally split Cartan subgroup, and it was shown to be equal to the
Goldie rank polynomial up to a scalar multiple.  
In the case of representations of Gelfand-Kirillov dimension at most
half the dimension of $G/K$, we also conjecture an explicit
relationship between our index  
polynomial and the multiplicities of the irreducible components 
occuring in the associated cycle of the corresponding coherent family. 
\end{abstract}

\maketitle

\section{Introduction}\label{section intro}

Let $G$ be a connected real reductive group; precisely, a finite cover
of a closed connected transpose-stable  
subgroup of $GL(n,\bR)$ with complexified Lie algebra $\frg$.   
Let $K$ be a maximal compact subgroup of $G$.  Write    
$\frg=\frk+\frp$ for the corresponding Cartan decomposition of $\frg$,
where $\frk$ is the complexified Lie algebra of $K$.    

\begin{subequations}\label{se:cohintro} 
Let $T\subset K$ be a maximal torus, so that $H_c = G^T$ is a
maximally compact Cartan subgroup, with 
Lie algebra $\frh_{c}$.  
Let $\Lambda\subset\widehat{H_{c}}\subset\frh_{c}^{\star}$ be the
lattice of weights of finite-dimensional $(\frg,K)$-modules.  
For a fixed $\lambda_{0}\in\frh_{c}^{\star}$ regular, a family of
virtual $(\frg,K)$-modules $X_\lambda$,  
$\lambda\in\lambda_{0}+\Lambda$, is called {\it coherent} if  
for each $\lambda$, $X_\lambda$ has infinitesimal character $\lambda$,
and for any finite-dimensional $(\frg,K)$-module $F$, and for any $\lambda$,
\begin{equation}\label{cohintro} 
X_\lambda\otimes F = \sum_{\mu\in\Delta(F)} X_{\lambda+\mu},
\end{equation}
where $\Delta(F)$ denotes the multiset of all weights of $F$. (A more
complete discussion appears in Section \ref{section coherent}.) The
reason for studying coherent families is that if $X$ is any
irreducible $(\frg,K)$-module of infinitesimal character $\lambda_0$,
then there is a {\it unique} coherent 
family with the property that
\begin{equation}\label{existscoherent}
X_{\lambda_0} = X.
\end{equation}
For any invariant of Harish-Chandra modules, one can therefore
ask how the invariant of $X_\lambda$ changes with $\lambda \in
\lambda_0 + \Lambda$.  The nature of this dependence is then a new
invariant of $X$. This idea is facilitated by the fact that
\begin{equation}
\text{$X_\lambda$ is irreducible or zero whenever $\lambda$ is
  integrally dominant;}
\end{equation}
zero is possible only for singular $\lambda$. (See for example
\cite{V2}, sections 7.2 and 7.3.) The notion of ``integrally
dominant'' is recalled in \eqref{intdom}; we write
$(\lambda_0+\Lambda)^+$ for the cone of integrally dominant elements.
We may therefore define 
\begin{equation}\label{annintro}
\Ann(X_{\lambda}) = \text{annihilator in $U(\frg)$ of $X_\lambda$}
\qquad (\lambda \in (\lambda_0 + \Lambda)^+).
\end{equation}
The ideal $\Ann(X_\lambda)$ is primitive if $X_\lambda$ is
irreducible, and equal to $U(\frg)$ if $X_\lambda = 0$. Write
$\rk(U(\frg)/\Ann(X_{\lambda})$) for the Goldie rank of the 
algebra $U(\frg)/\Ann(X_{\lambda})$. Let $W_\frg$ be the Weyl group of
$\frg$ with respect to $\frh_c$.  Joseph proved that the $\bN$-valued
map defined on integrally dominant $\lambda \in \lambda_{0}+\Lambda$ by
\begin{equation}\label{goldieintro} 
\lambda\mapsto \rk(U(\frg)/\Ann(X_{\lambda})) 
\end{equation}  
extends to a $W_{\frg}$-harmonic polynomial $P_{X}$ on
$\frh_{c}^{\star}$ called the {\it Goldie rank
  polynomial} for $X$. The polynomial $P_X$ is homogeneous of degree
$\sharp R_{\frg}^{+}-\Dim(X)$, where $\sharp R_{\frg}^{+}$ denotes the
number of positive $\frh_{c}$-roots in $\frg$ and $\Dim(X)$ is the
Gelfand-Kirillov dimension of $X$. 
Moreover, $P_{X}$ 
generates an irreducible representation of $W_{\frg}$. See \cite{J1I},
\cite{J1II} and \cite{J1III}. 


There is an interpretation of the $W_{\frg}$-representation generated
by $P_X$ in terms of the Springer
correspondence. 
For all  $\lambda\in (\lambda_0+\Lambda)^+$ such that
$X_\lambda \ne 0$ (so for example for all 
integrally dominant regular $\lambda$), the associated variety  
${\mathcal V}(\gr(\Ann(X_{\lambda})))$  
(defined by the associated graded ideal of $\Ann(X_{\lambda})$, in the
symmetric algebra $S(\frg)$) is the Zariski closure of a single nilpotent
$G_{\bC}$-orbit ${\mathcal O}$ in  
$\frg^{\star}$, independent of $\lambda$. 
(Here $G_\bC$ is a connected complex reductive algebraic
  group having Lie algebra $\frg$.) 
Barbasch and Vogan
proved that the Springer representation of $W_{\frg}$ attached to
${\mathcal O}$ coincides with the $W_{\frg}$-representation generated
by the Goldie rank polynomial $P_{X}$ (see \cite{BV1}).  

Here is another algebro-geometric interpretation of $P_{X}$. Write
\begin{equation}\label{eq:Korbit}
{\mathcal O}\cap (\frg/\frk)^* = \coprod_{j=1}^r {\mathcal O}^j
\end{equation}
for the decomposition into (finitely many) orbits of $K_{\mathbb C}$. 
(Here $K_{\mathbb C}$ is the complexification of $K$.)
Then the associated cycle of each $X_\lambda$ is
\begin{equation}\label{multintro}
\Ass(X_\lambda) = \coprod_{j=1}^r m^j_X(\lambda) \overline{{\mathcal
    O}^j} \qquad (\lambda \in (\lambda_0 + \Lambda)^+) 
\end{equation}
(see Definition 2.4, Theorem 2.13, and Corollary 5.20 in
\cite{V3}). The component multiplicity $m^j_X(\lambda)$ is a
function taking nonnegative integer values, and  extends to a
polynomial function on $\frh_{c}^*$. We call this polynomial the {\it
  multiplicity polynomial} for $X$ on the orbit ${\mathcal O}^j$. The
connection with the Goldie rank polynomial is that each $m^j_X$ is a
scalar multiple of $P_X$; this is a consequence of the proof of
Theorem 5.7 in \cite{J1II}. 

On the other hand, Goldie rank polynomials can be interpreted in terms
of the asymptotics of the global character $\ch_{\frg}(X_{\lambda})$
of $X_{\lambda}$ on a maximally split Cartan subgroup $H_{s} \subset
G$ with Lie algebra $\frh_{s,0}$.  Namely, if 
$x \in \frh_{s,0}$ is a generic regular element, King proved that the map  
\begin{equation}\label{kingintro} 
\lambda\mapsto \lim_{t\rightarrow
  0}t^{\Dim(X)}\ch_{\frg}(X_{\lambda})(\exp tx)  
\end{equation}  
on $\lambda_{0}+\Lambda$ extends to a polynomial $C_{X,x}$ 
on $\frh^{\star}_{c}$. We call this polynomial {\it King's character 
polynomial}. It coincides with the Goldie rank polynomial $P_{X}$ up to
a constant factor depending on $x$ (see \cite{K1}). 
More precisely, as
a consequence of \cite{SV}, one can show that there is a formula
\begin{equation}\label{eq:multchar}
C_{X,x} = \sum_{j=1}^r a^j_x m^j_{X};
\end{equation}
the constants $a^j_x$ are independent of $X$, and this formula is
valid for any $(\frg,K)$-module whose annihilator has associated
variety contained in $\overline{\mathcal O}$.
The polynomial $C_{X,x}$ expresses 
the dependence on $\lambda$ of the leading term in the Taylor
expansion of the numerator of the character of $X_\lambda$ on the
maximally split Cartan $H_{s}$.  

\end{subequations} 

In this paper, we assume that $G$ and $K$ have equal rank.  Under this
assumption, we use Dirac index to obtain the analog of King's
asymptotic character formula (\ref{kingintro}), or equivalently of the
Goldie rank polynomial (\ref{goldieintro}), in the case when $H_s$ is
replaced by a compact Cartan subgroup $T$ of $G$.  In the course of
doing this, we first prove a translation principle for the Dirac
index.

To define the notions of Dirac cohomology and index, we first recall
that there is a {\it Dirac operator} 
$D\in U(\frg)\otimes C(\frp)$, where $C(\frp)$ is the Clifford algebra
of $\frp$ with respect to an invariant non-degenerate symmetric  
bilinear form $B$ (see Section \ref{section setting}). 
If $S$ is a spin module for $C(\frp)$ then $D$ acts  
on $Y\otimes S$ for any $(\frg,K)$-module $Y$. The {\it Dirac cohomology} $H_{D}(Y)$ of $Y$ is defined as 
\beu
H_{D}(Y)=\Ker D / \Ker D\cap\Ime D.
\end{equation*}
It is a module for the spin double cover $\Kt$ of $K$.
Dirac cohomology was introduced by Vogan in the late 1990s (see
\cite{V4}) and turned out to be an interesting invariant attached to
$(\frg,K)$-modules (see \cite{HP2} for a thorough discussion).  

We would now like to study how Dirac cohomology varies over a coherent
family. This is however not possible; since Dirac cohomology is not an
exact functor, it cannot be defined for virtual  
$(\frg,K)$-modules. To fix this problem, we will replace Dirac cohomology by the Dirac index. 
(We note that there is a relationship between Dirac cohomology and
translation functors; see 
\cite{MP}, \cite{MP08}, \cite{MP09},
\cite{MP10}.) 

\begin{subequations}\label{se:cohindex}
Let $\frt$ be the complexified Lie algebra of the compact Cartan
subgroup $T$ of $G$. Then $\frt$ is a Cartan subalgebra of both $\frg$
and $\frk$. In this case, the spin module $S$ for $\widetilde{K}$ is
the direct sum of two pieces $S^+$ and $S^-$, and the Dirac cohomology
$H_D(Y)$ breaks up accordingly into $H_D(Y)^+$ and $H_D(Y)^-$. 
If $Y$ is admissible and has infinitesimal character, 
define the {\it Dirac index of $Y$} to be the virtual $\widetilde{K}$-module  
\begin{equation}
I(Y)= H_D(Y)^+-H_D(Y)^-.
\end{equation}
This definition can be extended to arbitrary finite length modules
(not necessarily with infinitesimal character),  replacing $H_D$ by
the higher Dirac cohomology of \cite{PS}. See Section \ref{section
  index}. Then $I$, considered as a functor from finite length
$(\frg,K)$-modules to virtual $\Kt$-modules, is additive with respect
to short exact sequences (see Lemma 
\ref{exact} and the discussion below (\ref{index formula})), so it
makes sense also for virtual $(\frg,K)$-modules. 
Furthermore, $I$ satisfies the  
following property (Proposition \ref{main}): for any
finite-dimensional $(\frg,K)$-module $F$,  
\beu
I(Y\otimes F)=I(Y)\otimes F.
\end{equation*}

Let now $\{X_{\lambda}\}_{\lambda \in \lambda_{0}+\Lambda}$ be a
coherent family 
of virtual $(\frg,K)$-modules. 

By a theorem of Huang and Pand{\v{z}}i{\'c}, the $\frk$-infinitesimal
character of any $\Kt$-type contributing to the Dirac cohomology
$H_D(Y)$ of an irreducible $(\frg,K)$-module $Y$ is
$W_{\frg}$-conjugate to the $\frg$-infinitesimal character of $Y$ (see
Theorem \ref{HPmain}). In terms of the virtual representations $\Et$
of $\Kt$ defined in Section \ref{section coherent}, the conclusion is that we
may write
\begin{equation}\label{indexformula}
I(X_{\lambda_{0}})=\sum_{w\in W_\frg} a_w \Et_{w\lambda_{0}}
\end{equation}
%
with $a_w\in \bZ$. Then, for any $\nu\in\Lambda$,
we have (Theorem \ref{translindex}): 
\eq
\label{indextransintro}
I(X_{\lambda_{0}+\nu})=\sum_{w\in W_\frg} a_w \Et_{w(\lambda_{0}+\nu)}
\eeq
with the same coefficients $a_w$. It follows that
$I(X_{\lambda_{0}})\neq 0$ implies  
$I(X_{\lambda_{0}+\nu})\neq 0$, provided both $\lambda_{0}$ and
$\lambda_{0}+\nu$ are regular for $\frg$ (Corollary \ref{nonzeroindex}). 

Combining the translation principle for Dirac index
\eqref{indextransintro} with the Weyl dimension formula for $\frk$, we
conclude that the map  
\begin{equation}\label{indexintro}
\lambda_{0}+\Lambda\rightarrow\bZ,\qquad \lambda\mapsto\dim I(X_{\lambda})
\end{equation}
extends to a $W_{\frg}$-harmonic polynomial $Q_{X}$ on $\frt^{\star}$
(see Section \ref{section Weyl group}). We call the polynomial $Q_X$
the {\it index polynomial} attached to $X$ and $\lambda_{0}$. If $Q_X$ is
nonzero, its degree is equal to the number $R_{\frk}^{+}$ of
positive $\frt$-roots in $\frk$. More precisely, $Q_X$ belongs to the
irreducible representation of $W_{\frg}$ generated by the Weyl
dimension formula for $\frk$ (Proposition
\ref{harmonic}). Furthermore, the coherent continuation representation
generated by $X$ must contain a copy of the index polynomial
representation (Proposition \ref{wequi}). We also prove that the index
polynomial vanishes for small representations. Namely, 
if the Gelfand-Kirillov dimension $\Dim(X)$ is less than the number 
$\sharp R_{\frg}^{+}-\sharp R_{\frk}^{+}$ of positive noncompact
$\frt$-roots in $\frg$, then $Q_{X}=0$ (Proposition \ref{indexzero}). 

An important feature of the index polynomial is the fact that $Q_{X}$
is the exact analogue of King's character polynomial
(\ref{kingintro}), but attached to the character on the compact Cartan
subgroup instead of the maximally split Cartan subgroup (see Section
\ref{section Goldie rank}).  In fact, $Q_{X}$ expresses  
the dependence on $\lambda$ of the (possibly zero) leading term in the Taylor
expansion of the numerator of the character of $X_\lambda$ on the
compact Cartan $T$:   
for any $y\in \frt_0$ regular, we have
\beu
\lim_{t\to 0+} t^{\sharp R_{\frg}^{+}-\sharp R_{\frk}^{+}}
\ch_\frg(X_\lambda)(\exp ty)=\textstyle{\frac{\prod_{\alpha\in
      R_\frk^+}\alpha(y)}{\prod_{\alpha\in
      R_\frg^+}\alpha(y)}}\, Q_X(\lambda). 
\end{equation*}

In particular, if $G$ is semisimple of Hermitian type, and if $X$ is
the $(\frg,K)$-module of a holomorphic discrete series representation,
then the index polynomial $Q_{X}$ 
coincides, up to a scalar multiple, with the Goldie rank polynomial
$P_{X}$ (Proposition \ref{holods}). Moreover, if $X$ is the $(\frg,K)$-module of any
discrete series representation (for $G$ not
necessarily Hermitian), then $Q_X$ and $P_X$ are both
divisible by the product of linear factors corresponding to the roots
generated by the $\tau$-invariant of $X$ (Proposition \ref{tau}).  
Recall that the $\tau$-invariant of the $(\frg,K)$-module $X$ consists
of the simple roots $\alpha$ such that the translate of $X$ to the
wall defined by $\alpha$ is zero (see Section 4 in \cite{V1} or
Chapter 7 in \cite{V2}). 

Recall the formula \eqref{eq:multchar} relating King's character
polynomial to the multiplicity polynomials for the associated
cycle. In Section 7, we conjecture a parallel relationship between the
index polynomial $Q_X$ and the multiplicity polynomials. For that, we
must assume that the $W_{\frg}$-representation generated by the Weyl dimension 
formula for $\frk$ corresponds to a nilpotent $G_{\bC}$-orbit
${\mathcal O}_{K}$ via the Springer correspondence. 
(At the end of Section \ref{orbits}, we list the classical groups for which
this assumption is satisfied.)
%
%
Then we conjecture (Conjecture \ref{conj}): if ${\mathcal V}(\gr(\Ann(X)))\subset \overline{{\mathcal O}_K}$, then
\begin{equation}\label{eq:multindex}
Q_{X}=\sum_{j}c^j m_{X}^{j}.
\end{equation}
Here the point is that the coefficients $c^j$ should be integers independent of
$X$.  We check that this conjecture holds in the case when $G=SL(2,\bR)$ and also
when $G=SU(1,n)$ with $n\geq 2$.

In the following we give a few remarks related to the significance of the above conjecture.

Associated varieties are a beautiful and concrete invariant for
representations, but they are too crude to
distinguish representations well. For example, all holomorphic discrete series
have the same associated variety. Goldie rank
polynomials and multiplicity functions both offer more information, but
the information is somewhat
difficult to compute and to interpret precisely.  The index polynomial
is easier to compute and interpret precisely; it can be computed from
knowing the restriction to $K$, and conversely, it contains 
fairly concrete information about the restriction to $K$. In the setting of (\ref{eq:multindex})
(that is, for fairly small representations), the conjecture
says that the index polynomial should be built from multiplicity
polynomials in a very simple way.  

The conjecture implies that, for these small representations,  the
index polynomial must be a multiple of the Goldie rank polynomial.
This follows from the fact that each $m^i_X$ is a multiple of $P_X$,
mentioned below (\ref{multintro}).
The interesting thing about this is that the index polynomial is perfectly
well-defined for {\it larger} representations as well. In some sense it is
defining something like ``multiplicities" for $\caO_K$ even when $\caO_K$ is not
a leading term. 

This is analogous to a result of Barbasch, which says that one can
define for any character expansion a number that is the multiplicity
of the zero orbit for finite-dimensional representations. In the case
of discrete series, this number turns out to be the formal degree (and
so is something really interesting). This indicates
that the index polynomial is an example of an interesting ``lower order
term" in a character expansion. We can hope that a fuller
understanding of all such lower order terms could be a path to
extending the theory of associated varieties to a more complete
invariant of representations.

\end{subequations} 
\section{Setting}\label{section setting}

Let $G$ be a finite cover of a closed connected transpose-stable
subgroup of $GL(n,\bR)$, with Lie algebra $\frg_{0}$. We denote by
$\Theta$ the Cartan involution of $G$ corresponding to the usual
Cartan involution of $GL(n,\bR)$ (the transpose inverse). Then
$K=G^\Theta$ is a maximal compact subgroup of $G$. Let
$\frg_{0}=\frk_{0}\oplus\frp_{0}$ be the Cartan decomposition of
$\frg_{0}$, with $\frk_0$ the Lie algebra of $K$.  
Let $B$ be the trace form on $\frg_{0}$. Then $B$ is positive definite
on $\frp_{0}$ and negative definite on $\frk_{0}$, and $\frp_{0}$ is
the orthogonal of $\frk_{0}$ with respect to $B$. We shall drop the
subscript `0' on real vector spaces to denote their
complexifications. Thus $\frg=\frk\oplus\frp$ denotes the Cartan
decomposition of the complexified Lie algebra of $G$. The linear
extension of $B$ to $\frg$ is again denoted by $B$. 
Let $G_{\bC}$ be a connected reductive algebraic group over $\bC$ with Lie
algebra $\fg$. 
  
Let $C(\frp)$ be the Clifford algebra of $\frp$ with respect to $B$
and let $U(\frg)$ be the universal enveloping algebra of $\frg$. 

The Dirac operator $D$ is defined as
\[
D=\sum_i b_i\otimes d_i\in U(\frg)\otimes C(\frp),
\]
where $b_i$ is any basis of $\frp$ and $d_i$ is the dual basis with
respect to $B$. Then $D$ is independent of the 
choice of the basis $b_i$ and is $K$-invariant. Moreover, the square
of $D$ is given by the following formula due to  
Parthasarathy \cite{P}:
\eq
\label{Dsquared}
D^2=-(\Cas_\frg\otimes 1+\|\rho_\frg\|^2)+(\Cas_{\frk_\Delta}+\|\rho_\frk\|^2).
\eeq
Here $\Cas_\frg$ is the Casimir element of $U(\frg)$ and $\Cas_{\frk_\Delta}$ is the Casimir element of $U(\frk_\Delta)$,
where $\frk_\Delta$ is the diagonal copy of $\frk$ in $U(\frg)\otimes C(\frp)$, defined using the obvious embedding
$\frk\hookrightarrow U(\frg)$ and the usual map $\frk\to\frs\fro(\frp)\to C(\frp)$. See \cite{HP2} for details.

If $X$ is a $(\frg,K)$-module, then $D$ acts on $X\otimes S$, where $S$ is a spin module for $C(\frp)$. The Dirac cohomology of $X$ is the module
\[
H_D(X)=\ker D / \Ker D\cap\Ime D
\]
for the spin double cover $\Kt$ of $K$. If $X$ is unitary or finite-dimensional, then
\[
H_D(X)=\Ker D=\Ker D^2.
\]
The following result of \cite{HP1} was conjectured by Vogan
\cite{V3}. Let $\frh=\frt\oplus\fra$ be a fundamental Cartan
subalgebra of $\frg$. We view $\frt^*\subset\frh^*$ by extending
functionals on $\frt$ by 0 over $\fra$. Denote by $R_{\frg}$ (resp. $R_{\frk}$)
the set of $(\frg,\frh)$-roots (resp. $(\frk,\frt)$-roots). We fix compatible 
positive root systems $R^{+}_{\frg}$ and  $R^{+}_{\frk}$ for $R_\frg$ and $R_\frk$ respectively.
In particular, this determines the half-sums of positive roots $\rho_\frg$ and $\rho_\frk$ as usual. Write $W_{\frg}$ (resp. $W_{\frk}$) for the
Weyl group associated with $(\frg,\frh)$-roots
(resp. $(\frk,\frt)$-roots). 

\begin{thm}
\label{HPmain}
Let $X$ be a $(\frg,K)$-module with infinitesimal character corresponding to $\Lambda\in\frh^*$ via the Harish-Chandra isomorphism. Assume
that $H_D(X)$ contains the irreducible $\Kt$-module $E_\gamma$ with highest weight $\gamma\in\frt^*$. 
Then $\Lambda$ is equal to $\gamma+\rho_\frk$ up to conjugation by the Weyl group $W_\frg$. In other words, the $\frk$-infinitesimal character of any $\Kt$-type contributing to $H_D(X)$ 
is $W_\frg$-conjugate to the $\frg$-infinitesimal character of $X$.
\end{thm}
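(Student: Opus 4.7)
The plan is to combine Parthasarathy's formula \eqref{Dsquared} with a stronger, $Z(\frg)$-valued refinement of it that controls the full infinitesimal character, not just its squared length.

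First, by complete reducibility for the compact group $\Kt$, I can lift a highest weight vector of $E_\gamma\subset H_D(X)$ to a $\Kt$-highest weight vector $v\in\ker D\subset X\otimes S$ of $\frt$-weight $\gamma$. Since $Dv=0$, also $D^2v=0$. Applying \eqref{Dsquared}, using that $\Cas_\frg\otimes 1$ acts on all of $X\otimes S$ by the scalar $\langle\Lambda,\Lambda\rangle-\|\rho_\frg\|^2$ (the infinitesimal character of $X$), while $\Cas_{\frk_\Delta}$ acts on the highest weight vector $v$ by $\langle\gamma+\rho_\frk,\gamma+\rho_\frk\rangle-\|\rho_\frk\|^2$, I would obtain
\[
\|\Lambda\|^2 \;=\; \|\gamma+\rho_\frk\|^2.
\]
This gives only equality of norms and is not yet enough to conclude $W_\frg$-conjugacy.

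To upgrade the statement, I would prove (Vogan's conjecture) that there is an algebra homomorphism
\[
\zeta : Z(\frg) \longrightarrow Z(\frk_\Delta)
\]
with the property that for every $z\in Z(\frg)$ there exists $a=a(z)\in U(\frg)\otimes C(\frp)$ with
\[
z\otimes 1 \;-\; \zeta(z) \;=\; D\,a + a\,D.
\]
Granting this, the operator $z\otimes 1-\zeta(z)$ preserves $\ker D$ and sends it into $\ker D\cap\Ime D$, so it acts as zero on $H_D(X)$. Evaluating on the class of $v$ yields
\[
\chi^\frg_\Lambda(z) \;=\; \chi^\frk_{\gamma+\rho_\frk}\!\bigl(\zeta(z)\bigr)\qquad\text{for every } z\in Z(\frg),
\]
where $\chi^\frg_\Lambda$ and $\chi^\frk_{\gamma+\rho_\frk}$ are the corresponding Harish-Chandra characters. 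Identifying $\zeta$ under the Harish-Chandra isomorphisms $Z(\frg)\cong S(\frh)^{W_\frg}$ and $Z(\frk_\Delta)\cong S(\frt)^{W_\frk}$ with the restriction map $S(\frh)^{W_\frg}\to S(\frt)^{W_\frk}$ coming from $\frh=\frt\oplus\fra$ (with the appropriate $\rho$-shifts, which are pinned down by comparison with Parthasarathy's formula in the quadratic case), the display above becomes $p(\Lambda)=p(\gamma+\rho_\frk)$ for every $W_\frg$-invariant polynomial $p$ on $\frh^*$. This forces $\Lambda$ and $\gamma+\rho_\frk$ to lie in a single $W_\frg$-orbit, which is the claim.

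The main obstacle is producing $\zeta$ with the stated shape. A natural route is through Kostant's theory of the cubic Dirac operator: filter $U(\frg)\otimes C(\frp)$ by a suitable degree, compute the supercommutator $[D,\cdot]$ on the associated graded algebra, and show that inside the $K$-invariants the cohomology $\ker D/(\ker D\cap \Ime D)$ is exactly the image of $Z(\frk_\Delta)$. The graded statement reduces to the classical fact that restriction of $\frg$-invariant polynomials to the Cartan factors through $W_\frk$-invariants on $\frt$, and an injectivity argument then lifts it back to the filtered level. The $\rho$-shifts are forced by matching top symbols with the explicit formula \eqref{Dsquared}, thereby giving the precise form of $\zeta$ used above.
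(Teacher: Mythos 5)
Your proposal reproduces the Huang--Pand\v zi\'c strategy for proving Vogan's conjecture and its consequence on infinitesimal characters. The paper itself does not reprove this statement; it is quoted verbatim from \cite{HP1}, and your proof is precisely the proof given there, so you are in essence proving the cited theorem rather than taking a new route.

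A few remarks for precision. The reduction from the homomorphism $\zeta$ to the equality of Harish-Chandra characters is correct, and the concluding separation-of-orbits argument (that $W_\frg$-invariant polynomials distinguish $W_\frg$-orbits) is the right way to finish, since equality of norms from Parthasarathy's formula is indeed insufficient. Your description of the hard step (constructing $\zeta$) is an accurate sketch of the \cite{HP1} argument: filter by degree in the $U(\frg)$ factor, compute the cohomology of $\operatorname{ad}D$ on the $K$-invariants of the associated graded algebra $S(\frg)\otimes C(\frp)$ via a Koszul-type complex, and identify the resulting cohomology with (the top symbol of) $Z(\frk_\Delta)$ before lifting through the filtration. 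Two small imprecisions worth fixing: first, the usual form of Vogan's conjecture produces $a=a(z)$ in the \emph{$K$-invariants} $\bigl(U(\frg)\otimes C(\frp)\bigr)^K$, which is what one actually obtains from the filtered argument, although for the final scalar computation you are right that $K$-invariance of $a$ is not strictly needed; second, the reference to Kostant's cubic Dirac operator is a slight red herring here, since for the symmetric pair $(\frg,\frk)$ the cubic term vanishes and the Huang--Pand\v zi\'c argument is carried out directly with the ordinary $D\in U(\frg)\otimes C(\frp)$. With those caveats, the plan is sound and matches the source the paper relies on.
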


\section{Dirac index}
\label{section index}

Throughout the paper we assume that $\frg$ and $\frk$ have equal rank, 
i.e., that there is a compact Cartan subalgebra $\frh=\frt$ in $\frg$. 
In this case,  $\frp$ is even-dimensional, so (as long as $\frp \neq\{0\}$)  
the spin module $S$ for the spin group $\Spin(\frp)$ (and therefore
for $\Kt$) is the direct sum of two pieces, which we will call $S^+$  
and $S^-$. To say which is which, it is enough to choose an
$SO(\frp)$-orbit of maximal isotropic subspaces  
of $\frp$. We will sometimes make such a choice by fixing a positive
root system $\Delta^+(\frg,\frt)$ for $\frt$ in $\frg$,  
and writing $\frn=\frn_\frk + \frn_\frp$ for the corresponding sum of
positive root spaces. Then $\frn_\frp$ is a choice of  
maximal isotropic subspace of $\frp$.  

The full spin module 
may be realized using $\frn_\frp$ as $S \simeq \bigwedge\frn_\frp$,
with the $C(\frp)$-action 
defined so that elements of $\frn_\frp$ act by wedging, and elements of
the dual isotropic space $\frn_\frp^-$ corresponding to the negative roots
act by contracting. (Details may be found for example in \cite{Chev}
at the beginning of Chapter 3.) In particular, the action of $C(\frp)$
respects parity of degrees: odd elements of $C(\frp)$ carry
$\bigwedge^{\text{even}}\frn_\frp$ to
$\bigwedge^{\text{odd}}\frn_\frp$ and so on. Because $\Spin(\frp)
\subset C_{\text{even}}(\frp)$, it follows that $\Spin(\frp)$
preserves the decomposition 
\[
S \simeq \bigwedge \frn_\frp = \bigwedge\nolimits^{\!\!\text{even}}\frn_\frp \oplus
\bigwedge\nolimits^{\!\!\text{odd}}\frn_\frp \defn S^+ \oplus S^-.
\]

The group $\Kt$ acts on $S$ as usual, through the map
$\Kt\to\Spin(\frp)\subset C(\frp)$, and hence also the Lie algebra
$\frk$ acts, through the map $\alpha:\frk\to
\frs\fro(\frp)\hookrightarrow C(\frp)$.  We call these actions of
$\Kt$ and $\frk$ the spin actions. It should however be noted that
although we wrote $S\simeq \bigwedge\frn_\frp$, the $\frt$-weights of
$S$ for the spin action are not the weights of $\bigwedge\frn_\frp$,
i.e., the sums of distinct roots in $\frn_\frp$, but rather these
weights shifted by $-(\rho_\frg-\rho_\frk)$. This difference comes
from the construction of the map $\alpha$ and the action of $C(\frp)$
on $S$.

In particular, the weights of $S^+ \simeq
\bigwedge^{\text{even}}\frn_\frp $ are
\[
-\rho_\frg + \rho_\frk + \text{(sum of an even number of  
distinct roots in } \frn_\frp).
\]
Similarly, the weights of $S^- \simeq \bigwedge^{\text{odd}}\frn_\frp$ are
\[
-\rho_\frg + \rho_\frk + \text{(sum of an odd number of  
distinct roots in } \frn_\frp).
\]


The Dirac operator $D$ 
interchanges $X\otimes S^+$ and $X\otimes S^-$ for any $(\frg,K)$-module $X$. 
(That is because it is of degree 1 in  
the Clifford factor.) It follows that the Dirac cohomology $H_D(X)$
also breaks up into even and odd parts, which we  
denote by $H_D(X)^+$ and $H_D(X)^-$ respectively. If $X$ is of finite length, then $H_D(X)$ is  
finite-dimensional, as follows from (\ref{Dsquared}), which implies
that $\Ker D^2$ is finite-dimensional for any  
admissible module $X$. 
If $X$ is of finite length and has infinitesimal character, then we define the 
 Dirac index of $X$ as the virtual
$\Kt$-module  
\eq
\label{defindex wrong}
I(X)= H_D(X)^+-H_D(X)^-.
\eeq
The first simple but important fact is the following proposition,
which is well known for the case of discrete series or finite-dimensional modules.

\begin{prop}
\label{propindex}
Let $X$ be a finite length $(\frg,K)$-module with infinitesimal character. Then there is an equality of virtual $\Kt$-modules
\[
X\otimes S^+ - X\otimes S^- = I(X).
\]
\end{prop}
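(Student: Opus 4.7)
The plan is to reduce the identity to an Euler-characteristic computation on each $\widetilde K$-isotypic component of $X\otimes S$, using Parthasarathy's formula \eqref{Dsquared} to show that $D^2$ acts there by a single scalar.

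\textbf{Step 1: Reduce to isotypic components.} Since $X$ is a finite-length $(\frg,K)$-module, it is $K$-admissible, and hence $X\otimes S$ is $\widetilde K$-admissible. Decompose
\[
X\otimes S \;=\; \bigoplus_{[E]\in\widehat{\widetilde K}} V_E,
\]
where $V_E$ is the $E$-isotypic component for the spin action of $\widetilde K$ (equivalently, for $\frk_\Delta$). The operator $D$ is $\widetilde K$-invariant, so it preserves each $V_E$, and it interchanges $V_E^+ := V_E\cap (X\otimes S^+)$ and $V_E^-$. Since Dirac cohomology is also $\widetilde K$-equivariant, it suffices to prove the identity on each $V_E$.

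\textbf{Step 2: $D^2$ acts by a scalar on $V_E$.} This is the crucial point, and uses both hypotheses. By \eqref{Dsquared},
\[
D^{2} = -\bigl(\Cas_\frg\otimes 1+\|\rho_\frg\|^2\bigr) + \bigl(\Cas_{\frk_\Delta}+\|\rho_\frk\|^2\bigr).
\]
Because $X$ has infinitesimal character, $\Cas_\frg\otimes 1$ acts on $X\otimes S$ by a single scalar $c_\frg$. Because $\Cas_{\frk_\Delta}$ lies in the center of $U(\frk_\Delta)$ and $V_E$ is an isotypic component for $\frk_\Delta$, it acts on $V_E$ by a single scalar $c_\frk(E)$. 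Hence $D^{2}$ acts on $V_E$ by the scalar $s(E):=-c_\frg-\|\rho_\frg\|^2+c_\frk(E)+\|\rho_\frk\|^2$.

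\textbf{Step 3: Two cases.} If $s(E)\neq 0$, then $D:V_E^+\to V_E^-$ is an isomorphism of $\widetilde K$-modules (with inverse $s(E)^{-1}D$), so $V_E^+-V_E^-=0$ as virtual $\widetilde K$-modules. Moreover, $D$ is injective on $V_E$, so $H_D(V_E)=0$ and the isotypic component does not contribute to $I(X)$ either. If $s(E)=0$, then $D^{2}=0$ on $V_E$, so $\im D|_{V_E}\subseteq \ker D|_{V_E}$, i.e.\ $(V_E,D)$ is an actual $\bZ/2$-graded complex. Since $V_E$ is finite-dimensional (by $\widetilde K$-admissibility and finite length of $X$, each $\widetilde K$-isotypic is finite-dimensional), the Euler characteristic of a bounded complex equals the Euler characteristic of its cohomology, giving
\[
V_E^+ - V_E^- \;=\; H_D(V_E)^+ - H_D(V_E)^- \qquad \text{in } R(\widetilde K).
\]
Under the identification $\ker D/(\ker D\cap \im D)=\ker D/\im D$ in this situation, this is exactly $I(V_E)$.

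\textbf{Step 4: Assemble.} Summing over the (possibly infinitely many) isotypic classes makes sense in the completion $R(\widetilde K)^{\wedge}$ used for virtual admissible $\widetilde K$-modules, and yields
\[
X\otimes S^+ - X\otimes S^- \;=\; \sum_{[E]} (V_E^+ - V_E^-) \;=\; \sum_{[E]} I(V_E) \;=\; I(X).
\]

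The main (and really only) obstacle is Step 2: without an infinitesimal character, $\Cas_\frg$ need only act with generalized eigenvalues, so $D^{2}$ would only be guaranteed to act with generalized eigenvalues on $V_E$, and the clean dichotomy ``isomorphism on $V_E$'' vs.\ ``honest differential on $V_E$'' breaks down. This is exactly why the hypothesis of infinitesimal character is imposed here and why the generalization to arbitrary finite length modules is deferred to the higher Dirac cohomology setup of \cite{PS}.
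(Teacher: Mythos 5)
Your proof is correct and takes essentially the same route as the paper's: Parthasarathy's formula together with the infinitesimal-character hypothesis makes $D^2$ act semisimply, the nonzero $D^2$-eigenspaces cancel in pairs under $D$, and on the zero eigenspace the Euler--Poincar\'e principle gives the result. The only cosmetic difference is that you refine the paper's $D^2$-eigenspace decomposition to $\widetilde K$-isotypic components, which makes explicit \emph{why} $D^2$ acts by scalars; the underlying mechanism is identical.
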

\pf By Parthasarathy's formula for $D^2$ (\ref{Dsquared}), 
$X\otimes S$ breaks into a direct sum of eigenspaces for $D^2$:
\[
X\otimes S=\sum_\lambda (X\otimes S)_\lambda.
\]
Since $D^2$ is even in the Clifford factor, this decomposition is compatible with the decomposition
into even and odd parts, i.e.,
\[
(X\otimes S)_\lambda=(X\otimes S^+)_\lambda \oplus (X\otimes S^-)_\lambda,
\]
for any eigenvalue $\lambda$ of $D^2$. Since $D$ commutes with $D^2$, it preserves each eigenspace.
Since $D$ also switches parity, we see that $D$ defines maps
\[
D_\lambda:(X\otimes S^{\pm})_\lambda \to (X\otimes S^{\mp})_\lambda
\]
for each $\lambda$. If $\lambda\neq 0$, then $D_\lambda$ is clearly an isomorphism (with inverse
$\frac{1}{\lambda}D_\lambda$), and hence
\[
X\otimes S^+ - X\otimes S^- = (X\otimes S^+)_0 - (X\otimes S^-)_0.
\]
Since $D$ is a differential on $\Ker D^2$, and the cohomology of this differential is exactly $H_D(X)$,
the statement now follows from the Euler-Poincar\'e principle.
\epf

\begin{cor}
\label{exact}
Let 
\[
0\to U\to V\to W\to 0
\]
be a short exact sequence of finite length $(\frg,K)$-modules, and assume that $V$ 
has infinitesimal character (so that $U$ and $W$ must have the same infinitesimal character as $V$).
Then there is an equality of virtual $\Kt$-modules
\[
I(V) = I(U) + I(W).
\] 
\end{cor}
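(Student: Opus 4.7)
The plan is to reduce the statement to Proposition \ref{propindex} by tensoring the short exact sequence with the (finite-dimensional) spin modules $S^\pm$.

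First I would justify the parenthetical remark: since $V$ has infinitesimal character $\chi$, the center $Z(\frg)$ acts on $V$ through the character $\chi$, hence also on its submodule $U$ and its quotient $W$. So all three modules have infinitesimal character $\chi$, and Proposition \ref{propindex} applies to each of them, giving
\[
I(U)=U\otimes S^+ - U\otimes S^-, \quad I(V)=V\otimes S^+ - V\otimes S^-, \quad I(W)=W\otimes S^+ - W\otimes S^-
\]
as virtual $\Kt$-modules.

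Next, since $S^\pm$ are finite-dimensional vector spaces, tensoring (over $\bC$) with them is an exact functor on $(\frg,K)$-modules. Therefore the original short exact sequence yields two short exact sequences of $(\frg,K)$-modules,
\[
0\to U\otimes S^+\to V\otimes S^+\to W\otimes S^+\to 0,\qquad 0\to U\otimes S^-\to V\otimes S^-\to W\otimes S^-\to 0,
\]
which are in particular short exact sequences of $\Kt$-modules. In the Grothendieck group of virtual $\Kt$-modules this gives $V\otimes S^\pm = U\otimes S^\pm + W\otimes S^\pm$. Subtracting the minus equation from the plus equation and invoking the three instances of Proposition \ref{propindex} gives
\[
I(V)=(U\otimes S^+ - U\otimes S^-)+(W\otimes S^+ - W\otimes S^-)=I(U)+I(W),
\]
which is the desired equality.

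There is no real obstacle: the only thing to notice is that Proposition \ref{propindex} rewrites the Dirac index, defined a priori via the non-exact functor $H_D$, as a difference of tensor products with the fixed finite-dimensional modules $S^\pm$, and the latter difference is manifestly additive on short exact sequences. The mild subtlety is that the proposition requires an infinitesimal character, which is why the hypothesis on $V$ is needed; propagating it to $U$ and $W$ via $Z(\frg)$-action is the only preliminary step.
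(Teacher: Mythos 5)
Your proof is correct and is essentially the same as the paper's: the paper simply observes that $I(\cdot)$, rewritten via Proposition \ref{propindex} as $(\cdot)\otimes S^+ - (\cdot)\otimes S^-$, is visibly additive on short exact sequences. You have merely spelled out the intermediate steps (exactness of $\otimes S^\pm$, propagation of the infinitesimal character to $U$ and $W$) that the paper leaves implicit.
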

\pf 
This follows from the formula in Proposition \ref{propindex}, since
the left hand side of that formula  
clearly satisfies the additivity property. 
\epf
To study the translation principle, we need to deal with modules
$X\otimes F$, where $X$ is a finite length  
$(\frg,K)$-module, and $F$ is a finite-dimensional
$(\frg,K)$-module. Therefore, Proposition 
\ref{propindex} and Corollary \ref{exact} are not sufficient for our
purposes, because they apply only to modules with infinitesimal character. 
Namely, if $X$ is of finite length and has infinitesimal character,
then $X\otimes F$ is of finite length, but it 
typically cannot be written as a direct sum of modules with
infinitesimal character. Rather, some of the summands of 
$X\otimes F$ only have generalized infinitesimal character. Recall
that $\chi:Z(\frg)\to\bbC$ is the generalized 
infinitesimal character of a $\frg$-module $V$ if there is a positive
integer $N$ such that 
\[
(z-\chi(z))^N=0\quad\text{on }V,\qquad \text{for every }z\in Z(\frg),
\]
where $Z(\frg)$ denotes the center of $U(\frg)$. Here is an example
showing that Proposition \ref{propindex} and Corollary \ref{exact} can
fail for modules with generalized infinitesimal character. 

\begin{ex}[\cite{PS}, Section 2] {\rm Let $G=SU(1,1)\cong SL(2,\bbR)$,
so that $K=S(U(1)\times U(1))\cong U(1)$, and $\frg=\frs\frl(2,\bbC)$.
Then there is an indecomposable $(\frg,K)$-module $P$ fitting into the short exact sequence
\[
0\to V_0\to P\to V_{-2}\to 0,
\]
where $V_0$ is the (reducible) Verma module with highest weight 0, and
$V_{-2}$ is the (irreducible) Verma module 
with highest weight -2. One can describe the $\frg$-action on $P$ very explicitly,
and see that $\Cas_\frg$ does not act by a scalar on $P$, so $P$ does not have
infinitesimal character. 

Using calculations similar to \cite{HP2}, 9.6.5, one checks that
for the index defined by (\ref{defindex wrong}) the following holds:
\eq
\label{indexP}
I(P)=-\bbC_1;\qquad I(V_0)=-\bbC_1;\qquad I(V_{-2})=-\bbC_{-1},
\eeq 
where $\bbC_1$ respectively $\bbC_{-1}$ is the one-dimensional $\Kt$-module of weight $1$ respectively $-1$.
So Corollary \ref{exact} fails for $P$. It follows that Proposition \ref{propindex} must also fail. This can
also be seen directly, by computing $P\otimes S^+-P\otimes S^-$. 
}
\end{ex}

The reason for the failure of both Proposition \ref{propindex} and
Corollary \ref{exact} is the fact that the generalized 0-eigenspace for $D$
contains two Jordan blocks for $D$, one of length 1 and the other of length 3.
The block of length 3 does contribute to $P\otimes S^+-P\otimes S^-$, but not
to $I(P)$. With this in mind, a modified version of Dirac cohomology, called
``higher Dirac cohomology", has been recently defined by Pand\v zi\'c and Somberg \cite{PS}.
It is defined as $H(X)=\bigoplus_{k\in\bbZ_+} H^k(V)$, where
\[
H^k(V) = \im D^{2k}\cap \ker D \big/ \im D^{2k+1}\cap\ker D.
\]
For a module $X$ with infinitesimal character, $H(X)$ is the same as
$H_D(X)$; in general, $H(X)$ contains $H_D(X)=H^0(X)$. If $X$ is an
arbitrary finite length module, then $H(X)$ is composed from contributions
from all odd length Jordan blocks in the generalized 0-eigenspace for $D$. 
It follows that if we let $H(X)^\pm$ be the even and odd parts
of $H(X)$, and define the stable index as 
\eq
\label{defindex right}
I(X)=H(X)^+-H(X)^-,
\eeq 
then Proposition \ref{propindex} holds for any module $X$ of
finite length, i.e.,  
\eq
\label{index formula}
I(X)= X\otimes S^+-X\otimes S^-
\eeq
(\cite{PS}, Theorem 3.4). It follows that the index defined in this
way is additive with respect to short exact sequences (\cite{PS},  
 Corollary 3.5), and it therefore makes sense for virtual
 $(\frg,K)$-modules, i.e., it is well defined on the Grothendieck
 group of the Abelian category of finite length
 $(\frg,K)$-modules. Let us also mention that there is an analogue of
 Theorem \ref{HPmain} for $H(X)$ (\cite{PS}, Theorem 3.3.) 

There is another way to define the index that circumvents completely
the discussion of defining Dirac cohomology in the right way. Namely,
one can simply use the statement of Proposition \ref{propindex}, or
(\ref{index formula}), as the definition of the index $I(X)$. It is
clear that with such a definition the index does make sense for
virtual $(\frg,K)$-modules. Moreover, one shows as above that all of
the eigenspaces for $D^2$ for nonzero eigenvalues cancel out in
(\ref{index formula}), so what is left is a finite combination of
$\Kt$-types, appearing in the 0-eigenspace for $D^2$. 

Whichever of these two ways to define $I(X)$ we take, we will from now
on work with Dirac index $I(X)$, defined for any virtual
$(\frg,K)$-module $X$, and satisfying (\ref{index formula}). 


\section{Coherent families}
\label{section coherent}

Fix $\lambda_0\in\frt^*$ regular and let $T$ be a compact Cartan subgroup of $G$ with complexified Lie algebra $\frt$. 
We denote by $\Lambda\subset\widehat{T}\subset\frt^*$ the lattice of weights of finite-dimensional representations of $G$ 
(equivalently, of finite-dimensional $(\frg,K)$-modules). A family of virtual $(\frg,K)$-modules $X_\lambda$, 
$\lambda\in\lambda_{0}+\Lambda$, is called {\it coherent} if

\begin{enumerate}
\item $X_\lambda$ has infinitesimal character $\lambda$; and
\item for any finite-dimensional $(\frg,K)$-module $F$, and for any $\lambda\in\lambda_{0}+\Lambda$,
\eq
\label{coh}
X_\lambda\otimes F = \sum_{\mu\in\Delta(F)} X_{\lambda+\mu},
\eeq
where $\Delta(F)$ denotes the multiset of all weights of $F$.
\end{enumerate}
See \cite{V2}, Definition 7.2.5. The reason that we may use coherent
families based on the compact Cartan $T$, rather than the maximally
split Cartan used in \cite{V2}, is our assumption that $G$ is
connected. 
%
%
%
%

A virtual $(\frg,K)$-module $X$ with regular infinitesimal character 
$\lambda_{0}\in\frh_{c}^{\star}$ can be placed in a unique coherent 
family as above (see Theorem 7.2.7 in \cite{V2}, and the references 
therein; this is equivalent to \eqref{existscoherent}).  Using this,
one can define an action of the integral Weyl  
group $W(\lambda_{0})$ attached to $\lambda_{0}$ on the set ${\mathcal
  M}(\lambda_{0})$ of virtual 
$(\frg,K)$-modules with infinitesimal character
$\lambda_{0}$. 
Recall that  
$W(\lambda_{0})$ consists of those elements $w\in W_\frg$ for which 
$\lambda_0-w\lambda_0$ is a sum of roots. If we write $Q$ for the root
lattice, then the condition for $w$ to be in $W(\lambda_0)$ is
precisely that $w$ preserves the lattice coset 
$\lambda_{0}+Q$ (see \cite{V2}, Section 7.2). Then for $w\in
    W(\lambda_0)$, we set 
\begin{equation*}
w\cdot X\defn X_{w^{-1}(\lambda_{0})}.
\end{equation*}
We view ${\mathcal M}(\lambda_{0})$ as a lattice (a free $\bZ$-module)
with basis the (finite) set of irreducible  
$(\fg,K)$-modules of infinitesimal character $\lambda_{0}$. 
A decomposition into irreducible components of this 
$W(\lambda_{0})$-representation, known as the {\it coherent continuation} 
representation, was obtained by Barbasch and Vogan (see \cite{BV1b}). 
The study of coherent continuation representations is important for 
deeper understanding of coherent families.  

A weight
$\lambda \in \lambda_0 + \Lambda$ is called {\it integrally dominant} if
\begin{equation}\label{intdom}
\langle\alpha^\vee,\lambda\rangle \ge 0 \ \text{whenever $\langle
  \alpha^\vee,\lambda_0 \rangle \in {\bbN}$} \qquad (\alpha \in R_\frg).
\end{equation} 
Recall from the introduction that we write $(\lambda_0 + \Lambda)^+$
for the cone of integrally dominant weights.

The notion of coherent families is closely related with
the Jantzen-Zuckerman translation principle.  
For example, if $\lambda$ is regular and $\lambda+\nu$ belongs to the
same Weyl chamber for integral roots (whose definition  
is recalled below), then $X_{\lambda+\nu}$  
can be obtained from $X_\lambda$ by a translation functor, i.e., by
tensoring with the finite-dimensional module $F_\nu$  
with extremal weight $\nu$ and then taking the component with
generalized infinitesimal character $\lambda+\nu$. 
The following observation is crucial for obtaining the translation
principle for Dirac index. 

\begin{prop}
\label{main}
Suppose $X$ is a virtual $(\frg,K)$-module and $F$ a finite-dimensional
$(\frg,K)$-module. Then
\[
I(X\otimes F)=I(X)\otimes F.
\]
\end{prop}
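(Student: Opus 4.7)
The plan is to reduce everything to the identity (\ref{index formula}), which expresses the Dirac index as the virtual $\Kt$-module $I(Y)=Y\otimes S^+-Y\otimes S^-$ for any virtual $(\frg,K)$-module $Y$. This formula is our key tool because it removes $D$ from the picture entirely: the index becomes an expression involving only tensor products with the (fixed, finite-dimensional) spin modules, and tensor product is exact, so both sides of (\ref{index formula}) are additive on short exact sequences and therefore make unambiguous sense on the Grothendieck group of finite length $(\frg,K)$-modules.

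First I would apply (\ref{index formula}) to $Y=X\otimes F$, which is again a virtual $(\frg,K)$-module (with $\frg$ and $K$ acting diagonally on the tensor product, and with the $K$-action lifted to $\Kt$ via the covering $\Kt\to K$). This gives
\[
I(X\otimes F)=(X\otimes F)\otimes S^+-(X\otimes F)\otimes S^-.
\]
Next I would use associativity of the tensor product together with the natural $\Kt$-equivariant isomorphism $F\otimes S^\pm\cong S^\pm\otimes F$ (both sides carry the diagonal $\Kt$-action, which is symmetric in the two tensor factors) to rewrite the right-hand side as
\[
X\otimes(S^+\otimes F)-X\otimes(S^-\otimes F)=(X\otimes S^+)\otimes F-(X\otimes S^-)\otimes F.
\]
Factoring out $F$ and applying (\ref{index formula}) once more to $X$ yields $\bigl(X\otimes S^+-X\otimes S^-\bigr)\otimes F=I(X)\otimes F$, as desired.

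The only substantive point to verify is that the $\Kt$-actions on both sides agree; this is the sort of thing that can be missed in the bookkeeping. On the left, $\Kt$ acts on $X\otimes F\otimes S^\pm$ diagonally, acting on $X$ and $F$ through the projection $\Kt\to K$ and on $S^\pm$ via spin. On the right, after the rearrangement $(X\otimes S^\pm)\otimes F$, the group $\Kt$ still acts diagonally on the same three tensor factors in exactly the same way. So the two virtual $\Kt$-modules are identified canonically and the equality holds. I do not expect a real obstacle here; the result is essentially a formal consequence of (\ref{index formula}) together with the compatibility of $\Kt$-actions under associativity and commutativity of tensor products.
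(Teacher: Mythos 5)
Your proof is correct and follows essentially the same route as the paper's: both apply the formula $I(Y)=Y\otimes S^+-Y\otimes S^-$ to $X\otimes F$ and to $X$, then observe that the resulting expressions agree after rearranging tensor factors. The extra remarks you add about compatibility of $\Kt$-actions and about working in the Grothendieck group are sensible bookkeeping but not substantively different from what the paper does.
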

\pf By Proposition \ref{propindex} 
and (\ref{index formula}),
\[
I(X\otimes F)=X\otimes F\otimes S^+ - X\otimes F\otimes S^-,
\]
while
\[
I(X)\otimes F= (X\otimes S^+ - X\otimes S^-)\otimes F.
\]
It is clear that the right hand sides of these expressions are the same.
\epf
\noindent
Combining Proposition \ref{main} with (\ref{coh}), we obtain
\begin{cor}
\label{cohindex}
Let $X_\lambda$, $\lambda\in\lambda_{0}+\Lambda$, be a coherent family
of virtual $(\frg,K)$-modules and let $F$ be a  
finite-dimensional $(\frg,K)$-module. Then
\eq
\label{cohindexformula}
I(X_\lambda)\otimes F=\sum_{\mu\in\Delta(F)} I(X_{\lambda+\mu}).
\eeq
\qed
\end{cor}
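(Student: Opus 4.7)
The plan is to combine Proposition \ref{main} with the defining coherence identity \eqref{coh}, using the fact that the Dirac index $I$ has already been extended additively to virtual $(\frg,K)$-modules (via the formula \eqref{index formula}, which is linear in the $(\frg,K)$-module slot).

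First I would apply Proposition \ref{main} to the left-hand side: writing $X = X_\lambda$, we have
\[
I(X_\lambda) \otimes F = I(X_\lambda \otimes F).
\]
Next I would rewrite $X_\lambda \otimes F$ using the coherence property \eqref{coh}, which expresses it as the finite signed sum $\sum_{\mu \in \Delta(F)} X_{\lambda+\mu}$ inside the Grothendieck group of finite length $(\frg,K)$-modules. Since $I$ is additive on short exact sequences (and therefore descends to the Grothendieck group), we can push $I$ inside the sum:
\[
I(X_\lambda \otimes F) = I\!\Bigl(\sum_{\mu \in \Delta(F)} X_{\lambda+\mu}\Bigr) = \sum_{\mu \in \Delta(F)} I(X_{\lambda+\mu}).
\]
Chaining these two equalities gives the desired identity.

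There is essentially no obstacle here; the only subtle point to verify is that both operations I am invoking are legitimate for virtual modules rather than just honest ones. For Proposition \ref{main}, the proof given in the excerpt via \eqref{index formula} already works for any virtual module, since $X \otimes S^\pm$ makes sense for virtual $X$. For the coherence relation \eqref{coh}, this is exactly an identity in the Grothendieck group by definition of a coherent family, so applying the additive functor $I$ term by term is justified. Hence the corollary follows with no further computation.
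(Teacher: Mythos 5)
Your proof is correct and takes essentially the same route as the paper, which simply states that the corollary follows by combining Proposition \ref{main} with the coherence identity \eqref{coh}. Your added remarks on why both steps are legitimate for virtual modules are accurate and just make explicit what the paper leaves implicit.
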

\noindent
This says that the family
$\{I(X_\lambda)\}_{\lambda\in\lambda_{0}+\Lambda}$ of virtual
$\Kt$-modules has some  
coherence properties, but it is not a coherent family for $\Kt$, as
$I(X_\lambda)$ does not have $\frk$-infinitesimal character  
$\lambda$. Also, the identity (\ref{cohindexformula}) is valid 
only for a $(\frg,K)$-module $F$, and not for an arbitrary
$\widetilde{K}$-module $F$. 

Using standard reasoning, as in \cite{V2}, Section 7.2, we can now
analyze the relationship between Dirac index  
and translation functors. 
\begin{subequations}\label{Kcoherent}
We first define 
some virtual representations of $\Kt$. Our choice of positive 
roots $R^+_\frk$ for $T$ in $K$ defines a Weyl denominator function
\begin{equation}\label{Weyldenominator}
d_\frk(\exp(y)) = \prod_{\alpha\in R^+_\frk} (e^{\alpha(y)/2} - e^{-\alpha(y)/2})
\end{equation}
on an appropriate cover of $T$. For $\gamma\in \Lambda+\rho_\frg$, the
Weyl numerator
\begin{equation*}
N_\gamma = \sum_{w\in W_\frk} \sgn(w) e^{w\gamma}
\end{equation*}
is a function on another double cover of $T$. According to Weyl's
character formula, the quotient
\begin{equation}
\ch_{\frk,\gamma} = N_\gamma/d_\frk
\end{equation}
extends to a class function on all of $\Kt$. Precisely,
$\ch_{\frk,\gamma}$ is the character of a virtual genuine representation
$\Et_\gamma$ of $\Kt$:
\begin{equation}
\Et_\gamma = \begin{cases} \sgn(x)\left(\text{irr. of highest
      weight $x\gamma - \rho_\frk$}\right)
&\text{$x\gamma$ is dom. reg. for $R^+_\frk$}\\
0 &\text{$\gamma$ is singular for $R_\frk$}
\end{cases}
\end{equation}
It is convenient to extend this definition to all of $\frt^*$ by
\begin{equation}
\Et_\lambda = 0 \qquad (\lambda \notin \Lambda+\rho_\frg).
\end{equation}

With this definition, the Huang-Pand\v zi\'c infinitesimal character
result clearly guarantees what we wrote in \eqref{indexformula}: 
%
\beu
I(X_{\lambda_{0}})=\sum_{w\in W_\frg} a_w \Et_{w\lambda_{0}}.
\end{equation*}
%
We could restrict the sum to
those $w$ for which $w\lambda_0$ is dominant for $R^+_\frk$, and get a
unique formula in which $a_w$ is the multiplicity of the $\Kt$
representation of highest weight $w\lambda_0 - \rho_\frk$ in
$I(X_{\lambda_0})$. But for the proof of the next theorem, it is more
convenient to allow a more general expression. 
\end{subequations} 

\begin{thm}
\label{translindex}
Suppose $\lambda_0\in\frt^*$ is regular for $\frg$. 
Let $X_\lambda$, $ \lambda\in\lambda_{0}+\Lambda$, be a coherent
family of virtual  
$(\frg,K)$-modules based on $\lambda_0 + \Lambda$. By Theorem \ref{HPmain}, we can write  
\eq
\label{indexatlambda}
I(X_{\lambda_{0}})=\sum_{w\in W_\frg} a_w \Et_{w\lambda_{0}},
\eeq
where $\Et$ denotes the family of finite-dimensional virtual
$\Kt$-modules defined in \eqref{Kcoherent}, and $a_w$ are integers. 

Then for any $\nu\in\Lambda$,
\eq
\label{indexatlambda+nu}
I(X_{\lambda_{0}+\nu})=\sum_{w\in W_\frg} a_w \Et_{w(\lambda_{0}+\nu)},
\eeq
with the same coefficients $a_w$.
\end{thm}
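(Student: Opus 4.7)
The strategy is to verify that the family $\{Y_\lambda\}_{\lambda \in \lambda_0+\Lambda}$ defined by $Y_\lambda := \sum_{w\in W_\frg} a_w \widetilde E_{w\lambda}$ satisfies the same coherence-type identity as $\{I(X_\lambda)\}$ does by Corollary~\ref{cohindex}, and then to deduce that the two families must coincide at every $\lambda_0+\nu$ using the regularity of $\lambda_0$ and the constraint provided by Theorem~\ref{HPmain}.

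The first step is to establish the virtual $\widetilde K$-module identity
$$
\widetilde E_\gamma \otimes F \;=\; \sum_{\mu\in\Delta(F)} \widetilde E_{\gamma+\mu}
$$
for any $\gamma\in\frt^*$ and any finite-dimensional $(\frg,K)$-module $F$. On $\widetilde T$, the character of the left side is $N_\gamma\cdot\ch F/d_\frk$. Since $F$ is a $K$-module, $\Delta(F)$ is $W_\frk$-stable; expanding $N_\gamma = \sum_{\sigma\in W_\frk}\sgn(\sigma) e^{\sigma\gamma}$ and reindexing $\mu\mapsto\sigma\mu$ in the resulting double sum converts $N_\gamma\cdot\ch F$ into $\sum_\mu N_{\gamma+\mu}$, which is the right side. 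Applying this with $\gamma = w\lambda_0$ and summing against the coefficients $a_w$ shows that $Y_{\lambda_0}\otimes F = \sum_{\mu\in\Delta(F)} Y_{\lambda_0+\mu}$ for every such $F$.

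Combining this with Corollary~\ref{cohindex} and the assumed equality $I(X_{\lambda_0}) = Y_{\lambda_0}$ yields, for every finite-dimensional $(\frg,K)$-module $F$,
$$
\sum_{\mu\in\Delta(F)} \bigl(I(X_{\lambda_0+\mu}) - Y_{\lambda_0+\mu}\bigr) \;=\; 0.
$$
By Theorem~\ref{HPmain}, each summand $D_{\lambda_0+\mu} := I(X_{\lambda_0+\mu}) - Y_{\lambda_0+\mu}$ lies in the $\bZ$-span of those $\widetilde E_\delta$ with $\delta\in W_\frg(\lambda_0+\mu)$. For a given $\nu\in\Lambda$, I would choose a suitable $F$ with $\nu\in\Delta(F)$ for which the orbits $W_\frg(\lambda_0+\mu)$ are pairwise disjoint as $\mu$ varies in $\Delta(F)$: under this condition, the displayed identity separates orbit-by-orbit and forces $D_{\lambda_0+\nu} = 0$, which is the conclusion of the theorem.

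The main obstacle is securing this orbit-disjointness in full generality. An incidence $\lambda_0+\mu_1 = w(\lambda_0+\mu_2)$ with $w\neq 1$ translates into the affine condition $(w-1)\lambda_0 = \mu_1 - w\mu_2$, which is automatically excluded for $\mu_1=\mu_2$ by regularity of $\lambda_0$, but may accidentally be satisfied for specific pairs $\mu_1 \neq \mu_2$. I would handle this by induction on a height function on $\Lambda$ with base case $\nu = 0$: at each step, the inductive hypothesis gives $D_{\lambda_0+\mu} = 0$ for all $\mu\in\Delta(F)$ strictly below $\nu$ in height, so only the terms with $\mu\in W_\frg\nu\cap\Delta(F)$ remain; by enlarging $F$ (for instance by tensoring with auxiliary finite-dimensional $(\frg,K)$-modules chosen so that the affine conditions fail for all other extremal weights) one can isolate the term $\mu = \nu$, concluding that $D_{\lambda_0+\nu} = 0$.
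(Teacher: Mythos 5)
The first three-quarters of your argument are fine, and indeed closely parallel what the paper does implicitly: the coherence identity $\widetilde E_\gamma\otimes F=\sum_{\mu\in\Delta(F)}\widetilde E_{\gamma+\mu}$ is exactly the statement ``$\widetilde E$ is a coherent family for $\widetilde K$'' invoked in the paper's proof, and the localization of $D_{\lambda_0+\mu}:=I(X_{\lambda_0+\mu})-Y_{\lambda_0+\mu}$ to the span of $\{\widetilde E_\delta : \delta\in W_\frg(\lambda_0+\mu)\}$ is the correct use of Theorem \ref{HPmain}. The problem is in the last step, which you correctly flag as ``the main obstacle'' but whose proposed resolution does not actually work. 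After the induction on $|\nu|$ you are left with the relation $\sum_{\mu\in W_\frg\nu}D_{\lambda_0+\mu}=0$, and you need to separate the $\mu=\nu$ term from the other extremal weights $\mu'\in W_\frg\nu$ for which $\lambda_0+\mu'\in W_\frg(\lambda_0+\nu)$. ``Enlarging $F$'' cannot help here: the affine coincidence condition $\lambda_0+\mu'=w(\lambda_0+\nu)$ depends only on $\lambda_0$ and the pair $(\mu',\nu)$, not on $F$; and since $\Delta(F)$ for a $(\frg,K)$-module $F$ is always $W_\frg$-stable, any $F$ containing $\nu$ as a weight contains all of $W_\frg\nu$, so the bad terms $\mu'$ never disappear from the sum. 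Thus the induction stalls precisely at the configurations where the orbits collide.

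What the paper does instead is a local argument. Step 1 handles $\nu$ with $\lambda_0$ and $\lambda_0+\nu$ in the \emph{same} integral Weyl chamber, where Vogan's Lemma 7.2.18(b) from \cite{V2} rules out coincidences $\lambda_0+\mu\sim_{W_\frg}\lambda_0+\nu$ for $\mu\neq\nu$. Step 2 is the crucial wall-crossing: one takes $\lambda_0$ very close to a single wall $\alpha^\perp$ and far from all others, takes $\nu$ small with $\lambda_0+\nu=s_\alpha\lambda_0$, and then the only $\mu\in\Delta(F_\nu)$ producing a $W_\frg$-coincidence are $\mu=0$ and $\mu=\nu$; since $I(X_{\lambda_0})$ is already known, one can solve for $I(X_{\lambda_0+\nu})$ from the two-term relation. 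Step 3 chains these together through a sequence of adjacent chambers. Your global height induction has no analogue of this wall-crossing step, which is exactly what controls the orbit coincidences your argument cannot exclude. To repair your proof you would need to replace the ``enlarge $F$'' idea by something like the paper's two-chamber geometry (or directly cite Lemma 7.2.18(b) for the interior case and add a wall-crossing lemma).
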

\pf
We proceed in three steps.

{\it Step 1:} suppose both $\lambda_{0}$ and $\lambda_{0}+\nu$ belong to the same integral Weyl chamber, which we can assume to be the dominant one. 
Let $F_\nu$ be the finite-dimensional $(\frg,K)$-module with extremal
weight $\nu$. Let us take the components of (\ref{cohindexformula}),
written for $\lambda=\lambda_0$, with $\frk$-infinitesimal characters which are
$W_\frg$-conjugate to $\lambda_{0}+\nu$. By Theorem \ref{HPmain}, any
summand $I(X_{\lambda_{0}+\mu})$ of the RHS of (\ref{cohindexformula})
is a combination of virtual modules with $\frk$-infinitesimal
characters which are $W_\frg$-conjugate to $\lambda_{0}+\mu$.  
By \cite{V2}, Lemma 7.2.18 (b), $\lambda_{0}+\mu$ can be
$W_\frg$-conjugate to $\lambda_{0}+\nu$ only if $\mu=\nu$. Thus we are
picking exactly the summand $I(X_{\lambda_{0}+\nu})$ of the RHS of
(\ref{cohindexformula}). 

We now determine the components of the LHS of (\ref{cohindexformula})
with $\frk$-infinitesimal characters which are  
$W_\frg$-conjugate to $\lambda_{0}+\nu$.
 Since $\Et$ is a coherent family for $\tilde{K}$, and $F_\nu$ can 
be viewed as a finite-dimensional $\tilde{K}$-module, one has 
\[
\Et_{w\lambda_{0}}\otimes F_\nu=\sum_{\mu\in\Delta(F_\nu)}\Et_{w\lambda_{0}+\mu}.
\]
The $\frk$-infinitesimal character of $\Et_{w\lambda_{0}+\mu}$ is $w\lambda_{0}+\mu$, so the components we are looking for must satisfy 
$w\lambda_{0}+\mu = u(\lambda_{0}+\nu)$, or equivalently
\[
\lambda_{0}+w^{-1}\mu=w^{-1}u(\lambda_{0}+\nu),
\]
for some $u\in W_\frg$. Using \cite{V2}, Lemma 7.2.18 (b) again, we see that $w^{-1}u$ must fix $\lambda_{0}+\nu$, and $w^{-1}\mu$ must be equal to $\nu$.
So $\mu=w\nu$, and the component $\Et_{w\lambda_{0}+\mu}$ is in fact $\Et_{w(\lambda_{0}+\nu)}$. So 
(\ref{indexatlambda+nu}) holds in this case.

 {\it Step 2:} suppose that $\lambda_{0}$ and $\lambda_{0}+\nu$ lie in two neighbouring chambers, with a common wall defined by a root $\alpha$, 
 and such that $\lambda_{0}+\nu=s_{\alpha}(\lambda_{0})$. Assume further that for any weight $\mu$ of $F_\nu$, $\lambda_{0}+\mu$ belongs to 
 one of the two chambers. Geometrically this means that $\lambda_{0}$ is close to the wall defined by $\alpha$ and sufficiently far from all other walls and from the origin. 
 We tensor (\ref{indexatlambda}) with $F_{\nu}$. By (\ref{cohindexformula}) and the fact that $\Et$ is a coherent family for $\tilde{K}$, we get
\begin{equation*}
\sum_{\mu\in\Delta(F_{\nu})}I(X_{\lambda_{0}+\mu})=\sum_{w\in W_{\frg}}a_{w}\sum_{\mu\in\Delta(F_{\nu})}\Et_{w(\lambda_{0}+\mu)}.
\end{equation*}
By our assumptions, the only $\lambda_{0}+\mu$ conjugate to $\lambda_{0}+\nu$ via $W_{\frg}$ are $\lambda_{0}+\nu$ and $\lambda_{0}$.
Picking the corresponding parts from the above equation, we get 
\begin{equation*}
I(X_{\lambda_{0}+\nu})+cI(X_{\lambda_{0}})=\sum_{w\in W_{\frg}}a_{w}\big(c\Et_{w\lambda_{0}}+\Et_{w(\lambda_{0}+\nu)}\big)
\end{equation*}
where $c$ is the multiplicity of the zero weight of $F_\nu$. This implies (\ref{indexatlambda+nu}), so the theorem 
is proved in this case.

{\it Step 3:} to get from an arbitrary regular $\lambda_{0}$ to an arbitrary $\lambda_{0}+\nu$, we first apply Step 1 to get from $\lambda_{0}$ to all 
elements of $\lambda_{0}+\Lambda$ in the same (closed) chamber. Then we apply Step 2 to pass to an element of a neighbouring chamber, then Step 1 again to get to all elements of that chamber, and so on.
\epf

\begin{cor}
\label{nonzeroindex}
In the setting of Theorem \ref{translindex}, assume that both $\lambda_{0}$ and $\lambda_{0}+\nu$ are regular for $\frg$.  
Assume also that $I(X_{\lambda_{0}})\neq 0$, i.e., at least one of the coefficients $a_w$ in (\ref{indexatlambda}) is nonzero. 
Then $I(X_{\lambda_{0}+\nu})\neq 0$.
\end{cor}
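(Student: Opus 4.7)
My plan is to use Theorem \ref{translindex} to rewrite the Dirac index on a $\lambda$-independent basis of virtual $\Kt$-modules, and then verify that for regular $\lambda$ the nonvanishing condition depends only on that basis data. Fix a set $\{c_i\}$ of coset representatives for $W_\frk\backslash W_\frg$, so that every $w\in W_\frg$ is written uniquely as $w=uc_i$ with $u\in W_\frk$. From the defining formula $N_\gamma=\sum_{u\in W_\frk}\sgn(u)e^{u\gamma}$ one reads off the antisymmetry $\Et_{u\gamma}=\sgn(u)\Et_\gamma$ for $u\in W_\frk$. Plugging this into the formula of Theorem \ref{translindex} and grouping terms by coset, I would obtain
\begin{equation*}
I(X_\lambda)=\sum_i b_i\,\Et_{c_i\lambda},\qquad b_i:=\sum_{u\in W_\frk}\sgn(u)\,a_{uc_i},
\end{equation*}
and the integers $b_i$ are the same for $\lambda=\lambda_0$ and $\lambda=\lambda_0+\nu$, since by Theorem \ref{translindex} the $a_w$'s themselves are the same.

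Next I would check that when $\lambda$ is regular for $\frg$, the virtual modules $\{\Et_{c_i\lambda}\}_i$ are linearly independent in the Grothendieck group of $\Kt$. This splits into two root-system observations. First, $\Et_{c_i\lambda}\ne 0$ is equivalent to $c_i\lambda$ being $R_\frk$-regular; by $W_\frg$-invariance of the pairing this is the same as $\lambda$ being regular for $c_i^{-1}R_\frk$, and since $c_i\in W_\frg$ permutes $R_\frg$ we have $c_i^{-1}R_\frk\subset R_\frg$, so $R_\frg$-regularity of $\lambda$ does the job. Second, for $i\ne j$ the weights $c_i\lambda$ and $c_j\lambda$ lie in different $W_\frk$-orbits, for otherwise $c_i\lambda=uc_j\lambda$ with $u\in W_\frk$ would force $u^{-1}c_ic_j^{-1}$ to fix the $W_\frg$-regular weight $\lambda$, giving $uc_j=c_i$ and contradicting the choice of coset representatives.

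Together these two observations show that $\{\Et_{c_i\lambda}\}_i$ are $\pm 1$ times pairwise distinct irreducible $\Kt$-modules, hence linearly independent. Consequently $I(X_\lambda)=0$ if and only if $b_i=0$ for every $i$, a condition that does not involve $\lambda$. Applied to $\lambda=\lambda_0$ and $\lambda=\lambda_0+\nu$, this yields the desired implication $I(X_{\lambda_0})\ne 0\Rightarrow I(X_{\lambda_0+\nu})\ne 0$. The argument is essentially formal once Theorem \ref{translindex} is in hand; the only point requiring any care is the linear independence step, and even there the key input is the elementary fact that $W_\frg$ acts freely on $R_\frg$-regular weights and sends $R_\frk$ into $R_\frg$.
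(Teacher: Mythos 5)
Your argument is correct and follows the paper's route (Theorem \ref{translindex} plus $R_\frg$-regularity of $\lambda_0+\nu$), but it fills in a cancellation check that the paper's one-line proof leaves implicit. You organize the expansion by $W_\frk$-cosets via the antisymmetry $\Et_{u\gamma}=\sgn(u)\Et_\gamma$, and observe that for $R_\frg$-regular $\lambda$ the terms $\Et_{c_i\lambda}$ are, up to sign, pairwise distinct irreducible $\Kt$-modules, so vanishing of the sum forces all $b_i$ to vanish---a $\lambda$-independent condition. The paper instead tacitly takes the $a_w$ in the canonical form described just before Theorem \ref{translindex} (restricting to $w$ with $w\lambda_0$ dominant for $R^+_\frk$, so the $a_w$ are $\Kt$-type multiplicities) and relies on the same orbit-distinctness observation without spelling it out, so your version is the more complete one. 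One small imprecision: in your orbit-distinctness step the element $u^{-1}c_ic_j^{-1}$ fixes $c_j\lambda$ rather than $\lambda$ itself; since $c_j\lambda$ is also $R_\frg$-regular, the conclusion $c_i=uc_j$ still follows.
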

\pf
This follows immediately from Theorem \ref{translindex} and the fact that $\Et_{w(\lambda_{0}+\nu)}$ can not be zero, since
$w(\lambda_{0}+\nu)$ is regular for $\frg$ and hence also for $\frk$.
\epf

\section{Index polynomial and coherent continuation representation}
\label{section Weyl group}

As in the previous section, let $\lambda_{0}\in\frt^{\star}$ be regular. For each $X\in {\mathcal M}(\lambda_{0})$, there is a unique coherent family 
$\{X_{\lambda}\mid \lambda\in\lambda_{0}+\Lambda\}$ such that $X_{\lambda_{0}}=X$. Define a function $Q_X\colon \lambda_{0}+\Lambda \to\bbZ$ by setting
\eq
\label{dim}
Q_{X}(\lambda)= \dim I(X_\lambda)\quad (\lambda\in\lambda_{0}+\Lambda).
\eeq
Notice that $Q_{X}$ depends on both $X$ {\it and} on the choice of
representative $\lambda_{0}$ for the infinitesimal character of $X$;
replacing $\lambda_{0}$ by  
$w_{1}\lambda_{0}$ translates $Q_{X}$ by $w_{1}$.
By Theorem \ref{translindex} and the Weyl dimension formula for
$\frk$, $Q_{X}$ is a polynomial function in $\lambda$.  
(Note that taking dimension is additive with respect to short exact
sequences of finite-dimensional  
modules, so it makes sense for virtual finite-dimensional modules.) We
call the function $Q_{X}$ the  
{\it index polynomial} associated with $X$ (or $\{X_{\lambda}\}$). 

Recall that a polynomial on $\frt^*$ is called $W_\frg$-harmonic, if
it is annihilated by any $W_\frg$-invariant constant coefficient 
differential operator on $\frt^*$ without constant term (see
\cite{V1}, Lemma 4.3.) 

\begin{prop}
\label{harmonic}
For any $(\frg,K)$-module $X$ as above, the index polynomial $Q_X$ is
$W_\frg$-harmonic. If $Q_X\neq 0$, then it is homogeneous of degree
equal to the number of positive roots for $\frk$; more precisely, it
belongs to the irreducible representation of $W_{\frg}$ generated by
the Weyl dimension formula for $\frk$. 
\end{prop}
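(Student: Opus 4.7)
The plan is to write $Q_X$ explicitly as a polynomial built from Weyl's dimension formula for $\frk$, and then to identify which $W_\frg$-subspace of polynomials it inhabits by appealing to a classical construction of Macdonald.

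First I would apply Theorem \ref{translindex} to obtain
\[
Q_X(\lambda) \;=\; \dim I(X_\lambda) \;=\; \sum_{w\in W_\frg} a_w\, \dim \Et_{w\lambda}.
\]
By the definition of $\Et_\gamma$ in \eqref{Kcoherent} together with Weyl's dimension formula, the function $\gamma\mapsto\dim\Et_\gamma$ is the single polynomial
\[
\dim \Et_\gamma \;=\; \pi_\frk(\gamma)\bigl/\pi_\frk(\rho_\frk),\qquad \pi_\frk(\gamma) := \prod_{\alpha\in R_\frk^+}\langle\gamma,\alpha^\vee\rangle,
\]
valid on all of $\frt^*$: the sign $\sgn(x)$ built into $\Et_\gamma$ is cancelled by the sign produced when $W_\frk$-antiinvariance of $\prod_\alpha \langle\cdot,\alpha^\vee\rangle$ is used to replace $x\gamma$ by $\gamma$ in the numerator, and this polynomial vanishes precisely when $\gamma$ is $R_\frk$-singular, as required. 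Substituting gives
\[
Q_X(\lambda) \;=\; \pi_\frk(\rho_\frk)^{-1}\sum_{w\in W_\frg} a_w\, \pi_\frk(w\lambda),
\]
which exhibits $Q_X$ as an element of the $W_\frg$-linear span of $\{\pi_\frk(w\,\cdot)\mid w\in W_\frg\}$. Since every summand is homogeneous of degree $\sharp R_\frk^+$, $Q_X$ is either zero or homogeneous of that degree, giving the degree claim.

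Next, to conclude both $W_\frg$-harmonicity and the precise identification of the representation, I would invoke the classical theorem of Macdonald on Weyl group representations attached to root subsystems: for any root subsystem $R'\subset R_\frg$, the polynomial $\pi_{R'}=\prod_{\alpha\in (R')^+}\alpha^\vee$ is $W_\frg$-harmonic, and the linear span of its $W_\frg$-translates is an irreducible $W_\frg$-representation (the $j$-induced representation $j_{W(R')}^{W_\frg}(\sgn)$ in Lusztig--Macdonald notation). Applied with $R'=R_\frk$, this identifies the $W_\frg$-representation generated by the Weyl dimension polynomial for $\frk$ as an irreducible subspace of $W_\frg$-harmonics; the second display above shows that $Q_X$ lies in this subspace, which yields both remaining assertions.

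The only substantive obstacle is pinpointing and citing Macdonald's irreducibility/harmonicity statement in the exact form needed; once that reference is in place, every other step is direct bookkeeping with Weyl's dimension formula and the translation identity \eqref{indexatlambda+nu}.
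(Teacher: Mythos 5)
Your proof is correct and takes essentially the same approach as the paper, which proves the proposition in one line by noting that the last statement follows from \eqref{indexatlambda+nu} and that the rest is then immediate; you have simply filled in the bookkeeping (writing $Q_X$ as an integer combination of $W_\frg$-translates of the Weyl dimension polynomial $\prod_{\alpha\in R_\frk^+}\langle\cdot,\alpha^\vee\rangle$ via Weyl's formula, and invoking Macdonald's theorem for irreducibility and harmonicity). One small inaccuracy worth noting: the identity $\dim\Et_\gamma=\prod_{\alpha\in R_\frk^+}\langle\gamma,\alpha^\vee\rangle\big/\prod_{\alpha\in R_\frk^+}\langle\rho_\frk,\alpha^\vee\rangle$ does \emph{not} hold on all of $\frt^*$, since $\Et_\gamma=0$ by convention for $\gamma\notin\Lambda+\rho_\frg$; this is harmless here because for $\lambda\in\lambda_0+\Lambda$ the condition $w\lambda\in\Lambda+\rho_\frg$ depends only on $w$ and $\lambda_0$, so the offending terms in $\sum_w a_w\dim\Et_{w\lambda}$ are identically zero and may simply be dropped from the sum.
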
 
\pf The last statement follows from $(\ref{indexatlambda+nu})$; the rest of the proposition is an immediate consequence.
\epf

Recall the natural representation of $W(\lambda_{0})$ (or indeed of all $W_{\frg}$) on the vector space $S(\frt)$ of polynomial functions on $\frt^*$, 
$$
(w\cdot P)(\lambda)=P(w^{-1}\lambda).
$$
The (irreducible) representation of $W(\lambda_{0})$ generated by the dimension formula for $\frk$ is called the {\it index polynomial representation}.
\begin{prop} 
\label{wequi}
The map
\begin{equation*}
{\mathcal M}(\lambda_{0})\rightarrow S(\frt),\qquad X\mapsto Q_{X}
\end{equation*}
intertwines the coherent continuation representation of $W(\lambda_0)$ with the action on polynomials. In particular, if $Q_{X}\neq 0$, then the coherent continuation representation 
generated by $X$ must contain a copy of the index polynomial representation.
\end{prop}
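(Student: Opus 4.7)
The plan is to prove the two claims in turn: establish the $W(\lambda_0)$-intertwining property first, then deduce the containment of the index polynomial representation from it using semisimplicity and Proposition \ref{harmonic}.

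For the intertwining, fix $w\in W(\lambda_0)$. The key observation is that the reparametrized family $\mu\mapsto X_{w^{-1}\mu}$ is precisely the coherent family attached to $w\cdot X$. Indeed, it takes the value $X_{w^{-1}\lambda_0} = w\cdot X$ at $\mu=\lambda_0$; moreover, since $W(\lambda_0)$ preserves $\lambda_0+Q\subset\lambda_0+\Lambda$ (so $X_{w^{-1}\mu}$ is defined for all $\mu$ in this coset) and the weight multiset $\Delta(F)$ is $W_\frg$-invariant for any finite-dimensional $(\frg,K)$-module $F$, the reparametrized family satisfies the coherence relation \eqref{coh}. Uniqueness of coherent continuation then forces $(w\cdot X)_\mu = X_{w^{-1}\mu}$, and applying $\dim I$ gives
\[
Q_{w\cdot X}(\lambda) = \dim I(X_{w^{-1}\lambda}) = Q_X(w^{-1}\lambda) = (w\cdot Q_X)(\lambda),
\]
which is the required intertwining.

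For the containment claim, assume $Q_X\neq 0$; let $M\subset\mathcal{M}(\lambda_0)\otimes_\bbZ\bbQ$ be the $W(\lambda_0)$-submodule generated by $X$ and $N\subseteq S(\frt)$ its image under $Y\mapsto Q_Y$. Equivariance supplies a $W(\lambda_0)$-surjection $M\twoheadrightarrow N$, and $N$ is the $W(\lambda_0)$-orbit span of $Q_X$, nonzero by hypothesis. Proposition \ref{harmonic} locates $N$ inside the irreducible $W_\frg$-representation $V$ generated by the Weyl dimension polynomial $P_d$ of $\frk$. The index polynomial representation $V_{\mathrm{ind}}$ is, by definition, the irreducible $W(\lambda_0)$-submodule of $V$ generated by $P_d$. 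From Theorem \ref{translindex} together with $\dim\widetilde{E}_\mu = P_d(\mu)$ (which follows from Weyl's dimension formula and the $W_\frk$-alternation of $P_d$), one obtains $Q_X(\lambda) = \sum_{w\in W_\frg} a_w P_d(w\lambda)$; I would verify that the projection of this element onto the $V_{\mathrm{ind}}$-isotypic piece in the $W(\lambda_0)$-decomposition of $V$ is nonzero, so that $V_{\mathrm{ind}}\hookrightarrow N$. Semisimplicity of rational representations of the finite group $W(\lambda_0)$ then splits $M\twoheadrightarrow N$, giving inside $M$ a $W(\lambda_0)$-submodule isomorphic to $V_{\mathrm{ind}}$.

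The main obstacle is the isotypic-projection step: a priori the restriction of the $W_\frg$-irreducible $V$ to $W(\lambda_0)$ may decompose into several distinct irreducible constituents, and not every nonzero element of $V$ need have nonzero $V_{\mathrm{ind}}$-component. Handling this requires using the $W_\frk$-alternation of $P_d$ and the relation between the $W_\frg$-stabilizer of $P_d$ and $W(\lambda_0)$, so that any nontrivial combination $\sum_w a_w P_d(w\lambda)$ arising from a Dirac index cannot have vanishing $V_{\mathrm{ind}}$-component without being identically zero.
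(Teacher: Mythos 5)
Your proof of the intertwining property is correct and is essentially the argument the paper gives: the paper cites \cite{V2}, Lemma 7.2.29 for the fact that the coherent family attached to $w\cdot X$ is $\mu\mapsto X_{w^{-1}\mu}$, whereas you rederive it from the uniqueness of coherent continuation together with $W_\frg$-invariance of weight multisets; both are fine, and the subsequent three-line computation is the same. After that the paper simply declares ``the rest of the proposition is now clear,'' so you are on your own for the second sentence, and that is where your proposal does not close.

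The gap is the one you name yourself: you assert $V_{\mathrm{ind}}\hookrightarrow N$, where $N$ is the $W(\lambda_0)$-span of $Q_X$, but you only say ``I would verify that the projection \dots is nonzero,'' and then acknowledge that a priori $Q_X$ could lie in a $W(\lambda_0)$-isotypic component of $V$ disjoint from the one containing $P_d$. That worry is substantive and you do not resolve it; a proof cannot leave it at ``handling this requires \dots.'' Note that your own framework already gives, for free, the weaker and unproblematic statement that the cyclic $W(\lambda_0)$-module generated by $X$ contains a copy of $N$ (image of a map from a semisimple module), hence some nonzero irreducible constituent of $V|_{W(\lambda_0)}$; what remains open in your write-up is precisely the identification of that constituent with $V_{\mathrm{ind}}$. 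If $V|_{W(\lambda_0)}$ happens to be $V_{\mathrm{ind}}$-isotypic, or if $W(\lambda_0)=W_\frg$, the step is immediate, but you have not reduced to such a case, and the general argument you sketch (via $W_\frk$-alternation and the stabilizer of $P_d$) is not carried out. So, as written, the proposal proves the first sentence of the proposition but not the ``in particular'' clause.
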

\pf
 Let $\{X_\lambda\}$ be the coherent family corresponding to $X$. Then for a fixed $w\in W(\lambda_0)$, the coherent 
 family corresponding to $w\cdot X$ is $\lambda_{0}+\nu\mapsto X_{w^{-1}(\lambda_{0}+\nu)}$ (see \cite{V2}, Lemma 7.2.29 and its proof). It follows that
\begin{eqnarray*}
(w\cdot Q_{X})(\lambda)&=&Q_{X}(w^{-1}\cdot\lambda)\\
&=&\dim I(X_{w^{-1}\lambda})\\
&=&Q_{w\cdot X}(\lambda),
\end{eqnarray*}
i.e., the map $X\mapsto Q_{X}$ is $W(\lambda_{0})$-equivariant. The rest of the proposition is now clear.
\epf
\begin{ex}
\label{exfd}
{\rm Let $F$ be a finite-dimensional $(\frg,K)$-module. The 
  corresponding coherent family is  
$\{F_\lambda\}$ from \cite{V2}, Example 7.2.12. In particular, every $F_\lambda$ is finite-dimensional up to sign, 
or 0. By Proposition \ref{propindex} 
and (\ref{index formula}), for any $F_\lambda$, 
\[
\dim I(F_\lambda)=\dim(F_\lambda\otimes S^+-F_\lambda\otimes S^-)=\dim
F_\lambda(\dim S^+-\dim S^-)=0, 
\]
since $S^+$ and $S^-$ have the same dimension (as long as $\frp\neq
0$). It follows that  
\[
Q_{F}(\lambda)=0.
\]
(Note that the index itself is a nonzero virtual module, but its dimension is zero. This may be a little surprising at first, but it is quite possible for virtual modules.)
This means that in this case Proposition \ref{wequi} gives no information about the coherent continuation representation (which is in this case a copy of the sign representation of $W_\frg$ spanned by $F$).}
\end{ex}

\begin{ex} 
\label{exds_sl2} 
{\rm Let $G=SL(2,\bbR)$, so that weights correspond to integers. Let $\lambda_{0}=n_0$ be a positive integer.
There are four irreducible $(\frg,K)$-modules with infinitesimal character $n_0$: the finite-dimensional module $F$,
the holomorphic discrete series $D^+$ of lowest weight $n_0+1$, the antiholomorphic discrete series $D^-$ of highest weight $-n_0-1$, and the irreducible principal series representation $P$. 

The coherent family $F_n$ corresponding to $F$ is defined by setting $F_n$ to be the finite-dimensional module with highest weight $n-1$ if $n>0$, $F_0=0$, and if $n<0$, $F_n=-F_{-n}$. Thus $s\cdot F=-F$, i.e., $F$ spans a copy of the sign representation of $W(\lambda_{0})=\{1,s\}$. 
As we have seen, the index polynomial corresponding to $F$ is zero. 

By \cite{V2}, Example 7.2.13, the coherent family $D_n^+$ corresponding to $D^+$ is given as follows: for $n\geq 0$, 
$D^+_n$ is the irreducible lowest weight $(\frg,K)$-module with lowest weight $n+1$, and for 
$n<0$, $D^+_n$ is the sum of $D^+_{-n}$ and the finite-dimensional module $F_{-n}$. It is easy to see that for each 
$n\in\bbZ$, $I(D^+_n)$ is the one-dimensional $\widetilde K$-module $E_n$ with weight $n$. So the index polynomial 
$Q_{D^+}$ is the constant polynomial $1$. Moreover, $s\cdot D^+=D^++F$.

One similarly checks that the coherent family $D_n^-$ corresponding to $D^-$ is given as follows: for $n\geq 0$, 
$D^-_n$ is the irreducible highest weight $(\frg,K)$-module with highest weight $-n-1$, and for 
$n<0$, $D^-_n=D^-_{-n}+F_{-n}$. For each $n\in\bbZ$, $I(D^-_n)=-E_{-n}$, so the index polynomial 
$Q_{D^-}$ is the constant polynomial $-1$. Moreover, $s\cdot D^-=D^-+F$.

Finally, one checks that the coherent family corresponding to $P$ consists entirely of principal series representations, that the $W(\lambda_{0})$-action on $P$ is trivial, and that the corresponding index polynomial is 0. 

Putting all this together, we see that the coherent continuation representation at $n_0$ consists of three trivial representations, spanned by $F+D^++D^-$, $D^+-D^-$ and $P$, and one sign representation, spanned by $F$. The index polynomial representation is the trivial representation spanned by the constant polynomials. The map $X\mapsto Q_X$ 
sends $P$, $F$ and $F+D^++D^-$ to zero, and $D^+-D^-$ to the constant polynomial $2$.
}
\end{ex}

The conclusion of Example \ref{exfd} about the index polynomials of finite-dimensional representations being zero can be generalized as follows.

\begin{prop}
\label{indexzero} 
Let $X$ be a $(\frg,K)$-module as above, with Gelfand-Kirillov dimension $\Dim(X)$. 
If $\Dim(X)<\sharp R_{\frg}^{+}-\sharp R_{\frk}^{+}$, then $Q_X=0$.
\end{prop}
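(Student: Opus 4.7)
The strategy is to read off $Q_X$ from the leading asymptotic at the identity of $T$ of the global character $\ch_\frg(X_\lambda)$, and then to invoke the classical Gelfand--Kirillov pole bound. Combining the identity $I(X_\lambda)=X_\lambda\otimes S^+-X_\lambda\otimes S^-$ of (\ref{index formula}) with the explicit character of $S^+-S^-$ on $T$, obtained from the realization $S^\pm\simeq\bigwedge^{\mathrm{even/odd}}\frn_\frp$ shifted by $-(\rho_\frg-\rho_\frk)$ as
\[
\chi_{S^+-S^-}(\exp y)=(-1)^{\sharp R_\frg^+-\sharp R_\frk^+}\prod_{\alpha\in R_\frg^+\setminus R_\frk^+}(e^{\alpha(y)/2}-e^{-\alpha(y)/2}),
\]
one gets the character identity
\[
\chi_{I(X_\lambda)}(\exp y)=(-1)^{\sharp R_\frg^+-\sharp R_\frk^+}\,\ch_\frg(X_\lambda)(\exp y)\prod_{\alpha\in R_\frg^+\setminus R_\frk^+}(e^{\alpha(y)/2}-e^{-\alpha(y)/2})
\]
on an appropriate cover of $T$. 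The left-hand side is smooth and takes the value $Q_X(\lambda)$ at the identity.

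Substituting $y\mapsto ty$ for a fixed regular $y\in\frt_0$ and expanding the Clifford product in powers of $t$, this becomes
\[
Q_X(\lambda)=\Bigl(\prod_{\alpha\in R_\frg^+\setminus R_\frk^+}\alpha(y)\Bigr)\lim_{t\to 0^+}t^{\sharp R_\frg^+-\sharp R_\frk^+}\,\ch_\frg(X_\lambda)(\exp ty).
\]
This is essentially the asymptotic character formula on the compact Cartan anticipated in the introduction, run in reverse: $Q_X(\lambda)$ controls the $t^{-(\sharp R_\frg^+-\sharp R_\frk^+)}$-coefficient in the expansion of the character at the identity.

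The conclusion now follows from the Harish-Chandra/Barbasch-Vogan pole bound: for an irreducible $(\frg,K)$-module $Y$ of Gelfand-Kirillov dimension $d$, the function $\ch_\frg(Y)(\exp ty)$ has pole order at most $d$ at $t=0$, because its leading asymptotic is a combination of Fourier transforms of invariant measures on nilpotent coadjoint orbits of real dimension $2d$, each homogeneous of degree $-d$ on the regular set. Since every irreducible constituent of $X_\lambda$ satisfies $\Dim\leq\Dim(X)$, the same bound holds for the virtual module $X_\lambda$. Under the hypothesis $\Dim(X)<\sharp R_\frg^+-\sharp R_\frk^+$, the factor $t^{\sharp R_\frg^+-\sharp R_\frk^+}$ strictly dominates the pole, so the displayed limit vanishes, $Q_X(\lambda)=0$ for every $\lambda$, and by polynomiality $Q_X\equiv 0$. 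The main obstacle is this final step: it depends on the character-asymptotic pole bound, which is classical (Barbasch-Vogan) but external to the present excerpt, whereas the first two displays are purely algebraic manipulations of (\ref{index formula}).
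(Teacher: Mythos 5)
Your proof is correct, but it takes a genuinely different route from the one in the paper, and the comparison is worth spelling out. The paper argues algebraically via the coherent continuation representation: it applies Theorem 2.6.(b) of Barbasch--Vogan (following Joseph), which says that any irreducible $W_\frg$-representation occurring in the coherent continuation representation attached to $X$ has ``fake degree'' at least $\sharp R_\frg^+ - \Dim(X)$; since $Q_X$ has degree $\sharp R_\frk^+$ and generates a summand of that representation by Proposition~\ref{wequi}, the hypothesis $\sharp R_\frk^+ < \sharp R_\frg^+ - \Dim(X)$ forces $Q_X=0$. Because that degree bound is proved in \cite{BV2} only for highest weight modules, the paper also needs a somewhat lengthy detour through Casian's generalized Jacquet functors $J^q_{\frb_x}$ to transfer the statement to $(\frg,K)$-modules. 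You instead go through the character side: your first two displays are, up to a bookkeeping sign, exactly the content of Theorem~\ref{ind=char} of Section~6 (the identity $\ch_\frg(X_\lambda)=\ch_\frk(I(X_\lambda))/\ch_\frk(S^+-S^-)$ on regular elliptic elements, and the Weyl-denominator expansion), and you then invoke the Barbasch--Vogan pole bound: a $(\frg,K)$-module of Gelfand--Kirillov dimension $d$ has character of pole order at most $d$ at the identity, because the leading term of the Harish-Chandra character expansion is homogeneous of degree $-d$.

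Both approaches are sound, and at bottom they are the same theorem (Joseph's degree bound is itself proved via character asymptotics), but as written they have different dependencies. Your route is in some ways cleaner: the pole bound of \cite{BV1} is stated directly for characters of real groups, so you avoid the Casian detour entirely; the cost is that you are effectively borrowing the machinery of Theorem~\ref{ind=char}, which appears one section later in the paper, and you rely on an analytic input (the local structure of characters) rather than the Weyl-group-theoretic Theorem 2.6 of \cite{BV2}. The paper's choice keeps the proof inside the thematic scope of Section~5 (coherent continuation representations) and makes the connection to Weyl group degrees explicit, at the cost of the Casian digression. Two small points worth noting in your write-up: you should be explicit that the identification of the restricted $G$-character with the $K$-character on regular elliptic elements is a theorem of Harish-Chandra (cited in the paper as \cite{HC}, \cite{AS}); and when you pass from irreducible to virtual $X_\lambda$, the bound on the Gelfand--Kirillov dimensions of the irreducible constituents of every $X_\lambda$ in the coherent family is needed --- this is standard (all constituents lie in the same ``cone'' bounded by $\Dim(X)$), but it deserves a sentence rather than being left implicit.
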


\pf We need to recall the setting of \cite{BV2}, Section 2, in particular their Theorem 2.6.(b) (taken from \cite{J1II}). Namely, to any 
irreducible representation $\sigma$ of $W_\frg$ one can associate its degree, i.e., the minimal integer $d$ such that $\sigma$ occurs in the $W_\frg$-representation $S^d(\frt)$. Theorem 2.6.(b) of \cite{BV2} says that the degree of any $\sigma$ occurring in the coherent continuation representation attached to $X$ must be at least equal to $\sharp R_{\frg}^{+}-\Dim(X)$. By assumption, the degree of $Q_X$, $\sharp R_{\frk}^{+} $, is smaller than 
$\sharp R_{\frg}^{+}-\Dim(X)$. On the other hand, by Proposition \ref{wequi} the index polynomial representation has to occur in the coherent continuation representation. It follows that $Q_X$ must be zero.
\epf

\begin{ex}
{\rm
Wallach modules for $Sp(2n,\bbR)$, $SO^*(2n)$ and $U(p,q)$, studied in \cite{HPP}, all have nonzero index, but their index polynomials are zero. This can also be checked explicitly from the results of \cite{HPP}, at least in low-dimensional cases. 

The situation here is like in Example \ref{exfd}; the nonzero Dirac index has zero dimension. In particular the conclusion $Q_X=0$ in Proposition \ref{indexzero} does not imply that $I(X)=0$.
}
\end{ex}

We note that in the proof of Proposition \ref{indexzero}, we are
applying the results of \cite{BV2} to $(\frg,K)$-modules, although
they are stated in \cite{BV2}  
for highest weight modules. This is indeed possible by results of
Casian \cite{C}. We explain this in more detail. Let ${\mathcal B}$ be
the flag variety of $\fg$ consisting of all the Borel subalgebras of
$\fg$. For a point $x\in{\mathcal B}$, write
$\frb_{x}=\frh_{x}+\frn_{x}$ for the corresponding  
Borel subalgebra, with nilradical $\frn_{x}$, and Cartan subalgebra
$\frh_{x}$. 

Define a functor $\Gamma_{\frb_{x}}$ from the category of
$\frg$-modules into the category of $\frg$-modules which are
$\frb_x$-locally finite, by 
\beu
\Gamma_{\frb_{x}}M=\big\{\frb_{x}-\text{locally finite vectors in } M\big\}.
\end{equation*}
Write $\Gamma^{q}_{\frb_{x}}$, $q\geq 0$, for its right derived functors. Instead of considering the various $\frb_{x}$, $x\in{\mathcal B}$, it is convenient to fix a Borel 
subalgebra $\frb=\frh+\frn$ of $\frg$ and twist the module $M$. By a twist of $M$ we mean that if $\pi$ is the $\frg$-action on $M$ and $\sigma$ is an automorphism of 
$\frg$ then the twist of $\pi$ by $\sigma$ is the $\frg$-action $\pi\circ\sigma$ on $M$. Then Casian's generalized Jacquet functors  $J_{\frb_{x}}^{q}$ are functors from the category of $\frg$-modules into the category of $\frg$-modules which are $\frb$-locally finite, given by 
\beu
J_{\frb_{x}}^{q}M=\Big\{\Gamma_{\frb_{x}}^{q}\Hom_{\bC}(M,\bC)\Big\}^{0}
\end{equation*}
where the superscript `0' means that the $\frg$-action is twisted by some inner automorphism of $\frg$, to make it $\frb$-locally finite instead of $\frb_{x}$-locally finite. 
In case $\frb_{x}$ is the Borel subalgebra corresponding to an Iwasawa decomposition of $G$,  
 $J^0_{\frb_x}$ is the usual Jacquet functor of \cite{BB}, while the $J_{\frb_{x}}^q$ vanish for $q>0$. 

The functors $J_{\frb_{x}}^{q}$ make sense on the level of virtual $(\frg,K)$-modules and induce an injective map 
\beu
X\mapsto \sum_{x\in{\mathcal B}/K}\sum_{q}(-1)^{q}J_{\frb_{x}}^{q}X
\end{equation*}
from virtual $(\frg,K)$-modules into virtual $\frg$-modules which are $\frb$-locally finite. Note that the above sum is well defined, since the 
$J_{\frb_{x}}^{q}$ depend only on the $K$-orbit of $x$ in $\mathcal{B}$.

An important feature of the functors $J_{\frb_{x}}^{q}$ is the fact that 
they satisfy the following identity relating the $\frn_{x}$-homology of $X$ with the $\frn$-cohomology of the modules $J_{\frb_{x}}^{q}X$ (see page 6 in \cite{C}): 
\beu
\sum_{p,q\geq 0}(-1)^{p+q}\tr_\frh H^{p}(\frn,J_{\frb_{x}}^{q}X)=\sum_{q}(-1)^{q}\tr_\frh H_{q}(\frn_{x},X)^{0}.
\end{equation*}
Here the superscript `0' is the appropriate twist interchanging $\frh_{x}$ with $\frh$, and 
 $\tr_\frh$ denotes the formal trace of the $\frh$-action. More precisely, if $Z$ is a locally finite $\frh$-module with finite-dimensional weight
 components $Z_\mu$, $\mu\in\frh^*$, then 
 \[
 \tr_\frh Z=\sum_{\mu\in\frh^*} \dim Z_\mu\, e^\mu.
 \]
 
Using this and Osborne's character formula, the global character of $X$ on an arbitrary $\theta$-stable Cartan subgroup can be read off from the characters of the $J^q_{\frb_{x}}X$ (see \cite{C} and 
\cite{C2}). In particular, we deduce that if $\tau$ is an irreducible representation of the Weyl group $W_{\frg}$ occuring in the coherent representation attached to $X$ 
then $\tau$ occurs in the coherent continuation representation attached to $J_{\frb_{x}}^{q}X$ for some $q\geq 0$ and some Borel subalgebra $\frb_{x}$. Moreover, 
from the definitions, one has $\Dim(X)\geq \Dim(J_{\frb_{x}}^{q}X)$. Applying the results in \cite{BV2} to the module $J_{\frb_{x}}^{q}X$, we deduce that:
\beu
d^{o}(\tau)\geq \sharp R^{+}_{\frg}-\Dim(J_{\frb_{x}}^{q}X)\geq  \sharp R^{+}_{\frg}-\Dim(X),
\end{equation*}
where $d^{o}(\tau)$ is the degree of $\tau$.

\section{Index polynomials and Goldie rank polynomials} \label{section Goldie rank}

Recall that $H_s$ denotes a maximally split Cartan subgroup of $G$ with complexified Lie algebra $\frh_s$.
As in Section \ref{section coherent}, we let $X$ be a module with regular infinitesimal character $\lambda_{0}\in\frh_s^{\star}$, and 
$\{X_{\lambda}\}_{\lambda\in\lambda_0+\Lambda}$ the corresponding coherent family on $H_s$. 
With notation from (\ref{annintro}) and (\ref{goldieintro}), Joseph proved that the mapping 
\eqn
\lambda\mapsto P_{X}(\lambda)=\rk (U(\frg)/\Ann(X_{\lambda})),
\eeqn
extends to a $W_\frg$-harmonic polynomial on $\frh_s^*$, homogeneous
of degree $\sharp R^{+}_{\frg}-\Dim(X)$, where $\Dim(X)$ is the
Gelfand-Kirillov dimension of $X$ (see \cite{J1I}, \cite{J1II} and
\cite{J1III}). He also  
found relations between the Goldie rank polynomial 
$P_X$ and Springer
representations; and (less directly) Kazhdan-Lusztig polynomials (see
\cite{J2} and \cite{J3}).  

Recall from \eqref{kingintro} King's analytic interpretation of the
Goldie rank polynomial: that for $x\in \frh_{s,0}$ regular, the expression
\eq
\label{charpol}
 \lim_{t\to 0+} t^d\ch_\frg(X_\lambda)(\exp tx)
\eeq
is zero if $d$ is an integer bigger than $\Dim(X)$; and if
$d=\Dim(X)$, it is (for generic $x$) a nonzero polynomial $C_{X,x}$ in
$\lambda$ called the character polynomial. Up to a constant, this
character polynomial is equal to the Goldie rank polynomial attached
to $X$. In other words, the Goldie rank polynomial expresses 
the dependence on $\lambda$ of the leading term in the Taylor
expansion of the numerator of the character of $X_\lambda$ on the
maximally split Cartan $H_{s}$.
 For more details, see \cite{K1} and also \cite{J1II}, Corollary 3.6.

The next theorem shows that the index polynomial we studied in Section
\ref{section Weyl group} is the exact analogue of King's character
polynomial, but attached to the character on the compact Cartan
subgroup instead of the maximally split Cartan subgroup.  

\begin{thm}
\label{ind=char} 
Let $X$ be a $(\frg,K)$-module with regular infinitesimal character
and let $X_\lambda$ be the corresponding coherent family on the
compact Cartan subgroup. Write  
$r_\frg$ (resp. $r_\frk$) for the number of positive $\frt$-roots for
$\frg$ (resp. $\frk$). Suppose $y\in \frt_0$ is any regular
element. Then the limit 
\eq
\label{indexpol}
\lim_{t\to 0+} t^d \ch_\frg(X_\lambda)(\exp ty)
\eeq
is zero if $d$ is an integer bigger than $r_\frg-r_\frk$. If
$d=r_\frg-r_\frk$, then the limit \eqref{indexpol} is equal to  
\[
\textstyle{\frac{\prod_{\alpha\in R_\frk^+}\alpha(y)}{\prod_{\alpha\in
      R_\frg^+}\alpha(y)}}\, Q_X(\lambda), 
\] 
where $Q_X$ is the index polynomial attached to $X$ as in
(\ref{dim}). In other words, the index polynomial, up to an explicit
constant, expresses  
the dependence on $\lambda$ of the (possibly zero) leading term in the Taylor
expansion of the numerator of the character of $X_\lambda$ on the
compact Cartan $T$.   
\end{thm}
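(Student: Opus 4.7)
The strategy is to exploit the Euler--Poincar\'e identity \eqref{index formula}, $I(X_\lambda) = X_\lambda\otimes S^+ - X_\lambda\otimes S^-$. Reading it as an equality of $\Kt$-characters on the regular set of $\wt T$, one obtains the master equation
\[
\ch_\frg(X_\lambda)(t)\cdot\bigl(\ch(S^+)(t) - \ch(S^-)(t)\bigr) = \ch(I(X_\lambda))(t).
\]
Solving for $\ch_\frg(X_\lambda)$, substituting $t = \exp(ty)$, and Taylor-expanding both the numerator $\ch(I(X_\lambda))$ and the factor $\ch(S^+-S^-)$ at $t=0$ will produce the claimed asymptotic.

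The first step is a direct computation of the spinor character. Using the description of the $\frt$-weights of $S^\pm$ recalled in Section \ref{section index} (sums of an even resp.\ odd number of distinct roots in $\frn_\frp$, shifted by $-(\rho_\frg-\rho_\frk)$), a standard Weyl-denominator manipulation yields
\[
\ch(S^+) - \ch(S^-) = e^{-\rho_\frg + \rho_\frk}\prod_{\alpha\in R_\frp^+}(1 - e^\alpha) = \epsilon\prod_{\alpha\in R_\frp^+}(e^{\alpha/2} - e^{-\alpha/2})
\]
for a sign $\epsilon\in\{\pm 1\}$ which, with the standard orientation of $S^+$ versus $S^-$, is $+1$. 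Substituting $t = \exp(ty)$ gives the expansion
\[
\ch(S^+ - S^-)(\exp ty) = t^{\,r_\frg - r_\frk}\prod_{\alpha\in R_\frp^+}\alpha(y) + O(t^{\,r_\frg - r_\frk + 2}).
\]

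The second step uses finite-dimensionality of $I(X_\lambda)$: its character is a finite exponential sum on $\wt T$, hence real-analytic at the identity, with $\ch(I(X_\lambda))(\exp ty) = \dim I(X_\lambda) + O(t) = Q_X(\lambda) + O(t)$. Dividing by the expansion from Step 1,
\[
t^{\,d}\,\ch_\frg(X_\lambda)(\exp ty) = \frac{t^{\,d}\bigl(Q_X(\lambda) + O(t)\bigr)}{t^{\,r_\frg - r_\frk}\prod_{\alpha\in R_\frp^+}\alpha(y)\bigl(1 + O(t^2)\bigr)}.
\]
For $d > r_\frg - r_\frk$, the surplus factor $t^{d-(r_\frg - r_\frk)}$ forces the limit to vanish; for $d = r_\frg - r_\frk$ it limits to $Q_X(\lambda)\big/\prod_{\alpha\in R_\frp^+}\alpha(y)$, which equals $Q_X(\lambda)\,\prod_{\alpha\in R_\frk^+}\alpha(y)\big/\prod_{\alpha\in R_\frg^+}\alpha(y)$ by splitting the positive roots into compact and noncompact pieces. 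This is exactly the formula in the statement.

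The main technical hurdle is the bookkeeping around $\ch(S^+ - S^-)$: one must verify the product formula above carefully, tracking the $-(\rho_\frg - \rho_\frk)$ shift coming from the spin map $\alpha\colon\frk\to C(\frp)$, and noting that although $\rho_\frg-\rho_\frk$ need not exponentiate on $T$ itself, the substitution $t = \exp(ty)$ turns each side of the master equation into an honest analytic function of $t$ on a small interval, so the limit is unambiguous. A minor side issue is that $\ch_\frg(X_\lambda)$ is \emph{a priori} a distribution on $G$; but its restriction to the regular set of $T$ is real-analytic, which is exactly where we evaluate, so the pointwise character identity holds on the nose.
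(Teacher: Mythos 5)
Your proof is correct and follows essentially the same route as the paper's: rewrite $\ch_\frg(X_\lambda)(\exp ty)$ as $\ch_\frk(I(X_\lambda))/\ch_\frk(S^+-S^-)$ via the Euler--Poincar\'e identity \eqref{index formula}, then compare orders of vanishing of numerator and denominator at $t=0$. The one step you treat as a ``minor side issue'' is in fact not just regularity of the $G$-character on $T^{\mathrm{reg}}$ but the nontrivial identification of the $G$-character with the $K$-character on elliptic $G$-regular elements; the paper supplies this with explicit references to Harish-Chandra and Atiyah--Schmid, whereas you would want to cite it rather than appeal only to real-analyticity.
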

\pf
The restriction to $K$ of any $G$-representation has a well defined
distribution character, known as the $K$-character. The restriction of
this $K$-character  
to the set of elliptic $G$-regular elements in $K$ is a function,
equal to the function giving the $G$-character 
(see \cite{HC}, and also \cite{AS}, (4.4) 
and the appendix). Therefore Proposition \ref{propindex} 
and (\ref{index formula}) imply
\eqn
\ch_\frg(X_\lambda)(\exp
ty)=\frac{\ch_\frk(I(X_\lambda))}{\ch_\frk(S^+-S^-)}(\exp ty). 
\eeqn
Also, it is clear that
\[
\lim_{t\to 0+} \ch_\frk(I(X_\lambda))(\exp
ty)=\ch_\frk(I(X_\lambda))(e)=\dim I(X_\lambda)=Q_X(\lambda). 
\]
Therefore the limit (\ref{indexpol}) is equal to 
\[
\lim_{t\to 0+} t^d \frac{\ch_\frk(I(X_\lambda))(\exp
  ty)}{\ch_\frk(S^+-S^-)(\exp ty)}=Q_X(\lambda)\lim_{t\to 0+}   
\frac{t^d}{\ch_\frk(S^+-S^-)(\exp ty)}.
\]
On the other hand, it is well known and easy to check that
\[
\ch_\frk(S^+-S^-)=\frac{d_\frg}{d_\frk},
\]
where $d_\frg$ (resp. $d_\frk$) denotes the Weyl denominator for
$\frg$ (resp. $\frk$). It is immediate from the product formula
\eqref{Weyldenominator} that  
we know that 
\[
d_\frg(\exp ty)=t^{r_\frg}\prod_{\alpha\in R_\frg^+}
\alpha(y) + \text{ higher order terms in } t 
\]
and similarly
\[
d_\frk(\exp ty)=t^{r_\frk}\prod_{\alpha\in R_\frk^+}\alpha(y) + \text{ higher order terms in } t. 
\]
So we see that
\[
\lim_{t\to 0+} \frac{t^d}{\ch_\frk(S^+-S^-)(\exp ty)}=\lim_{t\to 0+}
t^{d-r_\frg+r_\frk} \frac{\prod_{\alpha\in
    R_\frk^+}\alpha(y)}{\prod_{\alpha\in R_\frg^+} \alpha(y)}. 
\]
The theorem follows.

\epf

We are now going to consider some examples (of discrete series
representations) where we compare the index polynomial and the Goldie
rank polynomial. To do so, we identify the compact Cartan subalgebra
with the maximally split one using a Cayley transform. 

Recall that if $X$ is a discrete series representation with
Harish-Chandra parameter $\lambda$, then 
\[
I(X)= \pm H_D(X)= \pm E_\lambda,
\]
where $E_\lambda$ denotes the $\Kt$-type with infinitesimal character
$\lambda$. (The sign depends on the relation between the positive
system defined by $\lambda$ and the fixed one used in Section
\ref{section index} to define the index. See \cite{HP1}, Proposition
5.4, or \cite{HP2}, Corollary 
7.4.5.)  The index polynomial $Q_X$ is then given by the Weyl dimension formula
for this $\Kt$-type, i.e., by  
\eq 
\label{indexds}
Q_X(\lambda)=\prod_{\alpha\in R_\frk^+}
\frac{\langle\lambda,\alpha\rangle}{\langle\rho_\frk,\alpha\rangle}. 
\eeq
Comparing this with \cite{K2}, Proposition 3.1, we get:

\begin{prop}
\label{holods} 
Suppose $G$ is linear, semisimple and of Hermitian type. Let $X$ be
the $(\frg,K)$-module of a holomorphic discrete series
representation. Then the index polynomial $Q_X$ coincides with the
Goldie rank polynomial $P_X$ up to a scalar multiple. 
\end{prop}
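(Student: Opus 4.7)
The plan is to compare the two polynomials directly as explicit products over the positive compact roots. The index polynomial side is essentially already done: formula \eqref{indexds} expresses $Q_X$ as the Weyl dimension formula for a $\Kt$-type of highest weight $\lambda - \rho_\frk$, so up to a nonzero constant depending only on $\rho_\frk$,
\[
Q_X(\lambda) \;=\; c_\frk \prod_{\alpha \in R_\frk^+} \langle \lambda, \alpha \rangle,
\qquad c_\frk = \prod_{\alpha \in R_\frk^+} \frac{1}{\langle \rho_\frk,\alpha\rangle}.
\]
So the proof reduces to identifying $P_X$, up to scalar, with the same product $\prod_{\alpha \in R_\frk^+} \langle \lambda, \alpha \rangle$.

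For the Goldie rank side, the key input is King's computation (\cite{K2}, Proposition 3.1): in the Hermitian case, the Goldie rank polynomial of a holomorphic discrete series is (up to a scalar) exactly this product of positive compact root factors. This is consistent for dimensional reasons, since for a holomorphic discrete series the associated variety of the annihilator is (the closure of) the minimal Richardson orbit meeting $\frp^+$, so the Gelfand--Kirillov dimension equals $\dim\frp^+ = \sharp R_\frg^+ - \sharp R_\frk^+$; then Joseph's degree formula forces $\deg P_X = \sharp R_\frg^+ - \Dim(X) = \sharp R_\frk^+$, matching $\deg Q_X$. One then cites King's proposition to get the explicit form, and the equality $Q_X = (\text{const}) \cdot P_X$ is immediate from comparing the two product expressions.

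The only step requiring genuine verification (rather than invocation of cited results) is matching conventions: King works with Harish-Chandra parameters on a maximally split Cartan, while our $Q_X$ lives on $\frt^*$. I would handle this by transporting via the Cayley transform already used implicitly in the paragraph preceding the proposition statement; under this identification both polynomials become polynomials in the same variable and the comparison of leading monomials (or, equivalently, of values at $\lambda = \rho_\frg$, where both sides are nonzero) fixes the constant of proportionality. No further obstacle is expected, since the main structural work---translation invariance of the index and the identification $I(X) = \pm E_\lambda$ for discrete series---is already in place from Sections \ref{section index}--\ref{section Weyl group} and the Huang--Pand\v{z}i\'{c} result.
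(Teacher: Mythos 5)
Your proposal matches the paper's argument: the paper also derives $Q_X$ from formula \eqref{indexds} (the Weyl dimension formula for the $\Kt$-type $E_\lambda = \pm I(X)$) and then simply cites \cite{K2}, Proposition 3.1 to identify $P_X$, up to scalar, with the same product over $R_\frk^+$, after identifying the two Cartans by a Cayley transform. Your extra dimension-count ($\Dim(X)=\dim\frp^+$, so $\deg P_X = \sharp R_\frk^+ = \deg Q_X$) is a useful sanity check but plays the same supporting role.
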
 
Of course, $Q_X$ is not always equal to $P_X$, since the degrees of
these two polynomials are different in most cases.  

In the following we consider the example of discrete series
representations for $SU(n,1)$. The choice is dictated by the existence
of explicit formulas for the Goldie rank polynomials computed in
\cite{K2}. 

The discrete series representations for $SU(n,1)$ with a fixed
infinitesimal character can be parametrized by integers $i\in
[0,n]$. To see how this works, we introduce some notation. First, we
take for $K$ the group $S(U(n)\times U(1))\cong U(n)$. The compact
Cartan subalgebra $\frt$ consists of diagonal matrices, and we
identify it with $\bbC^{n+1}$ in the usual way. We make the usual
choice for the dominant $\frk$-chamber $C$: it consists of those
$\lambda\in\bbC^{n+1}$ for which 
\[
 \lambda_1\geq\lambda_2\geq\dots \geq\lambda_n.
\]
Then $C$ is the union of $n+1$ $\frg$-chambers $D_0,\dots,D_n$, where $D_0$ consists of $\lambda\in C$ such that $\lambda_{n+1}\leq \lambda_n$, 
$D_n$ consists of $\lambda\in C$ such that $\lambda_{n+1}\geq \lambda_1$, and for $1\leq i\leq n-1$,
\[
D_i=\{\lambda\in C\,\big|\, \lambda_{n-i}\geq \lambda_{n+1}\geq\lambda_{n-i+1}\}.
\]
Now for $i\in [0,n]$, and for $\lambda\in D_i$, which is regular for
$\frg$ and analytically integral for $K$, we denote by $X_\lambda(i)$
the discrete series 
representation with Harish-Chandra parameter $\lambda$. We use the 
same notation for the corresponding $(\frg,K)$-module. For $i=0$,
$X_\lambda(i)$ is holomorphic and this case is settled by Proposition
\ref{holods}; the result is that both the index polynomial and the
Goldie rank polynomial are proportional to the Vandermonde determinant

\eq
\label{vandermonde}
V(\lambda_1,\dots,\lambda_n)=\prod_{1\leq p<q\leq n}(\lambda_p-\lambda_q).
\eeq
The case $i=n$ of antiholomorphic discrete series representations is
analogous. For  
$1\leq i\leq n-1$, the index polynomial of $X_\lambda(i)$ is still
given by (\ref{vandermonde}). On the other hand, the character
polynomial is up to a constant multiple given by the formula (6.5) of
\cite{K2}, as the sum of two determinants. We note that King's
expression can be simplified and that the character polynomial of
$X_\lambda(i)$ is in fact equal to 

\eq
\label{chards}
\left|\begin{matrix} \lambda_1^{n-2}&\dots&\lambda_{n-i}^{n-2}&\lambda_{n-i+1}^{n-2}&\dots&\lambda_n^{n-2} \cr
      \lambda_1^{n-3}&\dots&\lambda_{n-i}^{n-3}&\lambda_{n-i+1}^{n-3}&\dots&\lambda_n^{n-3} \cr
\vdots&&\vdots&\vdots&&\vdots \cr
\lambda_1&\dots&\lambda_{n-i}&\lambda_{n-i+1}&\dots&\lambda_n \cr
1&\dots&1&0&\dots&0 \cr
0&\dots&0&1&\dots&1 \end{matrix}
\right|
\eeq
\smallskip

\noindent up to a constant multiple. 

For $i=1$, (\ref{chards}) reduces to the Vandermonde determinant $V(\lambda_1,\dots,\lambda_{n-1})$. Similarly, for $i=n-1$, we get
$V(\lambda_2,\dots,\lambda_n)$. In these cases, the Goldie rank polynomial divides the index polynomial.

For $2\leq i\leq n-2$, the Goldie rank polynomial is more complicated. For example, if $n=4$ and $i=2$, (\ref{chards}) becomes 
\[
-(\lambda_1-\lambda_2)(\lambda_3-\lambda_4)(\lambda_1+\lambda_2-\lambda_3-\lambda_4),
\]
and this does not divide the index polynomial. For $n=5$ and $i=2$, (\ref{chards}) becomes
\begin{multline*}
-(\lambda_1-\lambda_2)(\lambda_1-\lambda_3)(\lambda_2-\lambda_3)(\lambda_4-\lambda_5) \\
(\lambda_1\lambda_2+\lambda_1\lambda_3-\lambda_1\lambda_4-\lambda_1\lambda_5+\lambda_2\lambda_3
-\lambda_2\lambda_4-\lambda_2\lambda_5-\lambda_3\lambda_4-\lambda_3\lambda_5+\lambda_4^2+\lambda_4\lambda_5+\lambda_5^2),
\end{multline*}
and one can check that the quadratic factor is irreducible.

More generally, for any $n\geq 4$ and $2\leq i\leq n-2$, the Goldie rank polynomial (\ref{chards}) is divisible by $(\lambda_p-\lambda_q)$ whenever
$1\leq p<q\leq n-i$ or $n-i+1\leq p<q\leq n$. This is proved by subtracting the $q$th column from the $p$th column. On the other hand, if
$1\leq p\leq n-i<q\leq n$, we claim that (\ref{chards}) is not divisible by $(\lambda_p-\lambda_q)$. Indeed, we can substitute $\lambda_q=\lambda_p$ into (\ref{chards}) and subtract the $q$th column from the $p$th column. After this we develop the determinant with respect to the $p$th column. The resulting sum of two determinants is equal to the Vandermonde determinant $V(\lambda_1,\dots,\lambda_{p-1},\lambda_{p+1},\dots,\lambda_n)$, and this is not identically zero.

This proves that for $X=X_\lambda(i)$ the greatest common divisor of $P_X$ and $Q_X$ is 
\eq
\label{gcd}
\prod_{1\leq p<q\leq n-i}(\lambda_p-\lambda_q)\prod_{n-i+1\leq r<s\leq n}(\lambda_r-\lambda_s).
\eeq
Comparing with the simple roots $\Psi_i$ corresponding to the chamber $D_i$ described on p. 294 of \cite{K2}, we see that the linear factors of (\ref{gcd})
correspond to roots generated by the compact part of $\Psi_i$. On the other hand, the set of compact roots in $\Psi_i$ is equal to the $\tau$-invariant of $X_\lambda(i)$, as proved in \cite{HS}, Proposition 3.6 (see also \cite{K1}, Remark 4.5). Recall that 
the $\tau$-invariant of a $(\frg,K)$-module $X$ consists of the simple roots $\alpha$ such that the translate of $X$ to the wall defined by $\alpha$ is 0; see \cite{V1}, Section 4.

In particular, we have checked a special case of the following proposition.

\begin{prop}
\label{tau}
Assume that $G$ is a real reductive Lie group in the Harish-Chandra class and that $G$ and $K$ have equal rank. Let $X$ be the discrete series representation of $G$ with Harish-Chandra parameter $\lambda$.
Then the index polynomial $Q_X$ and the Goldie rank polynomial $P_X$ are both divisible by the product of linear factors 
corresponding to the roots generated by the $\tau$-invariant of $X$.
\end{prop}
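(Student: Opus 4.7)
The plan is to show that both $Q_X$ and $P_X$ are anti-invariant under the reflection subgroup $W(\tau)\subset W_\frg$ generated by the simple reflections $\{s_\alpha : \alpha\in\tau(X)\}$. By classical invariant theory (Chevalley), a polynomial on $\frt^*$ anti-invariant under such a reflection group is divisible by $\prod_{\beta\in\Sigma^+}\langle\cdot,\beta\rangle$, where $\Sigma$ is the sub-root-system generated by $\tau(X)$ and $\Sigma^+$ is the induced positive system; this is exactly the divisibility claimed by the proposition.

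To establish the anti-invariance, I would use the identity $s_\alpha\cdot X = -X$ in the coherent continuation representation, valid for each $\alpha\in\tau(X)$. This is essentially built into the definition of the $\tau$-invariant: if $\alpha\in\tau(X)$ then the translation of $X$ to the $\alpha$-wall vanishes, and the Jantzen--Zuckerman wall-crossing relation $U_\alpha Y = Y + s_\alpha\cdot Y$ (see \cite{V2}, Sections~7.2--7.3) then forces $s_\alpha\cdot X = -X$. Combined with the $W(\lambda_0)$-equivariance of $X\mapsto Q_X$ from Proposition~\ref{wequi}, and the analogous equivariance of $X\mapsto P_X$ proved by Joseph in \cite{J1II}, one obtains
$$Q_X(s_\alpha\lambda) = Q_{s_\alpha\cdot X}(\lambda) = -Q_X(\lambda), \qquad P_X(s_\alpha\lambda) = -P_X(\lambda)$$
for every $\alpha\in\tau(X)$. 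The sign change propagates multiplicatively to all of $W(\tau)$, and Chevalley's divisibility theorem finishes the proof.

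For $Q_X$ a direct independent verification is also available, which I would include as a sanity check. For a discrete series with Harish-Chandra parameter $\lambda$ dominant with respect to a positive system $R_\frg^+$, the $\tau$-invariant consists of the compact simple roots of $R_\frg^+$, so the sub-root-system $\Sigma$ they generate is contained in $R_\frk$. Since formula (\ref{indexds}) expresses $Q_X$ as a constant multiple of $\prod_{\alpha\in R_\frk^+}\langle\cdot,\alpha\rangle$, divisibility by $\prod_{\beta\in\Sigma^+}\langle\cdot,\beta\rangle$ is automatic. This also matches what is computed explicitly in the $SU(n,1)$ calculation preceding the proposition.

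The main technical point I expect to be the obstacle is pinning down the sign identity $s_\alpha\cdot X = -X$ in the precise conventions of this paper: the coherent continuation action is that of the integral Weyl group $W(\lambda_0)$ rather than of all of $W_\frg$, and one must check that $W(\tau)\subseteq W(\lambda_0)$ and that the wall-crossing formula $U_\alpha Y = Y + s_\alpha\cdot Y$ is the one actually needed. Once the sign identity is secured, applying Chevalley's theorem to the reflection group $W(\tau)$ is routine.
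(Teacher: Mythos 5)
Your proposal is correct, but it takes a genuinely different route from the paper. The paper's proof is nearly a two-line citation. For $Q_X$ it observes that, by \cite{HS} (Proposition~3.6), the $\tau$-invariant of a discrete series is exactly the set of compact simple roots, so the roots it generates form a subsystem $\Sigma\subset R_\frk$; the explicit formula \eqref{indexds} presents $Q_X$, up to a constant, as the full product $\prod_{\alpha\in R_\frk^+}\langle\lambda,\alpha\rangle$, which is then visibly divisible by the sub-product over $\Sigma^+$. For $P_X$ the paper simply cites \cite{V1}, Proposition~4.9 (adding that the Bernstein degree polynomial appearing there equals the Goldie rank polynomial up to a constant by \cite{J1II}, Theorem~5.7). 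Your main argument---anti-invariance of $Q_X$ and $P_X$ under the reflection subgroup $W(\tau)$ via the sign relation $s_\alpha\cdot X=-X$ for $\alpha\in\tau(X)$, followed by Chevalley's divisibility theorem---is exactly the mechanism behind Vogan's cited Proposition~4.9. Spelling it out buys you a uniform treatment of both polynomials, and in fact a stronger statement for $Q_X$: the divisibility then holds for \emph{any} irreducible $(\frg,K)$-module $X$, not only a discrete series, since you never invoke the explicit formula \eqref{indexds}. The worry you flag about $W(\tau)\subseteq W(\lambda_0)$ is a non-issue: the $\tau$-invariant consists by definition of simple \emph{integral} roots, so its generating reflections automatically lie in $W(\lambda_0)$, and the wall-crossing identity $U_\alpha X = X + s_\alpha\cdot X$ does indeed force $s_\alpha\cdot X=-X$ once the translate of $X$ to the $\alpha$-wall vanishes. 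Your ``sanity check'' for discrete series is precisely the paper's argument for $Q_X$.
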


\pf The $\tau$-invariant of $X$ is still given as above, as the compact part of the simple roots corresponding to $\lambda$. In particular, the roots generated by the $\tau$-invariant are all compact, and the corresponding factors divide $Q_X$, which is given by (\ref{indexds}). 

On the other hand, by \cite{V1}, Proposition 4.9, the Goldie rank polynomial is always divisible by the factors corresponding to roots generated by the $\tau$-invariant. We note that the result in \cite{V1} is about the Bernstein degree polynomial, which is up to a constant factor equal to the Goldie rank polynomial
by \cite{J1II}, Theorem 5.7.
\epf

Note that for $G=SU(n,1)$, the result we obtained is stronger than the
conclusion of Proposition \ref{tau}. Namely, we proved that the
product of linear factors  
corresponding to the roots generated by the $\tau$-invariant of $X$ is
in fact the greatest common divisor $R$ of $P_X$ and $Q_X$. We note
that it is easy to calculate the degrees of all the polynomials
involved. Namely, if $2\leq i\leq n-2$, the degree of $R$ is
$\binom{i}{2}+\binom{n-i}{2}$. Since $\Dim(X)=2n-1$ (see \cite{K2}),
and $\sharp R_{\frg}^{+}=\binom{n+1}{2}$, 
the degree of $P_X$ is $\binom{n-1}{2}$. It follows that the degree of
$P_X/R$ is $i(n-i)-(n-1)$. On the other hand, since the degree of 
$Q_X$ is $\sharp R_{\frk}^{+}=\binom{n}{2}$, the degree of $Q_X/R$ is
$i(n-i)$. 

\section{Index polynomials and nilpotent orbits} \label{orbits}

\begin{subequations}\label{Korbit}
Assume again that we are in the setting \eqref{se:cohintro} of the
introduction, so that $Y=Y_{\lambda_0}$ is an irreducible
$(\frg,K)$-module. (We use a different letter from the $X$ in the
introduction as a reminder that we
will soon be imposing some much stronger additional hypotheses on
$Y$.) Recall from \eqref{multintro} the expression
\begin{equation}
\Ass(Y_\lambda) = \coprod_{j=1}^r m^j_Y(\lambda) \overline{{\mathcal
    O}^j} \qquad (\lambda \in (\lambda_0 + \Lambda)^+), 
\end{equation}
and the fact that each $m^j_Y$ extends to a polynomial function on
$\frt^*$, which is a multiple of the Goldie rank polynomial:
\begin{equation}
m^j_Y = a^j_Y P_Y,
\end{equation}
with $a^j_Y$ a nonnegative rational number depending on $Y$.
On the other hand, the Weyl dimension formula for $\frk$ defines a
polynomial on the dual of the compact Cartan subalgebra $\frt^*$ in
$\frg$, with 
degree equal to the cardinality $\sharp R_{\frk}^{+}$ of positive
roots for $\frk$. Write $\sigma_{K}$ for the representation of the
Weyl group $W_{\frg}$ generated by this polynomial. Suppose that
$\sigma_{K}$ is a Springer representation, i.e., it is associated with a
nilpotent $G_{\bC}$-orbit ${\mathcal O}_{K}$: 
\begin{equation}\label{assum1}
\sigma_K \overset{\text{Springer}}\longleftrightarrow {\mathcal O}_K
  \subset \frg^*.  
\end{equation}

  Here $G_\bbC$ denotes a connected complex reductive algebraic 
group having Lie algebra $\frg$. 
Assume also that there is a Harish-Chandra module $Y$ 
of regular infinitesimal character $\lambda_0$ 
such that
\begin{equation}\label{assum2}
{\mathcal V}(\gr(\Ann(Y)))=\overline{{\mathcal O}_{K}}.
\end{equation}
Recall from the discussion before (\ref{eq:Korbit}) that ${\mathcal
  V}(\gr(\Ann(Y)))$ is the variety associated with the graded ideal  
of $\Ann(Y)$ in the symmetric algebra $S(\frg)$. 

%
%
%
%

Our assumptions force the degree of the Goldie rank
polynomial $P_Y$ attached to $Y$ to be  
\[
\sharp R_{\frg}^{+} - \Dim(Y)=\sharp R_{\frg}^{+}-\half\dim {\mathcal
  O}_{K}=\half(\dim \mathcal{N}-\dim\mathcal{O}_K)=\sharp
R_{\frk}^{+},
\]
where $\mathcal{N}$ denotes the cone of nilpotent elements in
$\frg^{\star}$. 
In other words, the Goldie rank polynomial $P_Y$ has the same degree
as the index polynomial $Q_Y$. 

%
%
%
%
We conjecture that for representations attached to ${\mathcal O}_K$,
the index polynomial admits an expression analogous to \eqref{eq:multchar}. 
\end{subequations} 
\begin{conj}
\label{conj}
Assume that the $W_{\frg}$-representation $\sigma_K$ generated by the
Weyl dimension formula for $\frk$  
corresponds to a nilpotent $G_{\bC}$-orbit ${\mathcal O}_{K}$ via the
Springer correspondence. 
Then for each $K_\bC$-orbit ${\mathcal O}_{K}^{j}$ on
${\mathcal O}_{K}\cap (\frg/\frk)^{\star}$, there exists an
integer $c_{j}$ such that for any  
Harish-Chandra module $Y$ for $G$ satisfying 
${\mathcal V}(\gr(\Ann(Y))) \subset \overline{O_{K}}$, we have 
\beu
Q_{Y}=\sum_{j}c_{j}m_{Y}^j.
\end{equation*}
Here $Q_{Y}$ is the index polynomial attached to $Y$ as in Section
\ref{section Weyl group}. 
\end{conj}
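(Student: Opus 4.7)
The plan is to match two expressions for the leading Taylor coefficient of the numerator of $\ch_\frg(Y_\lambda)$ on the compact Cartan $T$. Theorem \ref{ind=char} identifies this coefficient, at the order $\sharp R_\frg^+ - \sharp R_\frk^+$, with $Q_Y(\lambda)$ up to a fixed rational function of $y\in\frt_0$. The second expression, which is the heart of the proof, should come from an analogue on $T$ of the Schmid--Vilonen characteristic-cycle argument underlying King's formula \eqref{eq:multchar}: it should read $\sum_j \gamma_j(y)\, m_Y^j(\lambda)$, where $\gamma_j(y)$ is built from a Fourier transform of the canonical measure on the orbit $\mathcal{O}_K^j \subset (\frg/\frk)^*$ and is independent of $Y$. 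Under the hypothesis $\mathcal{V}(\gr(\Ann Y))\subset\overline{\mathcal{O}_K}$ the degree equality $\sharp R_\frg^+-\Dim(Y) = \sharp R_\frk^+$ ensures that these are the genuine top-order contributions on both sides.

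The first step is to establish the compact-Cartan leading-term formula
\[
\lim_{t\to 0^+} t^{\sharp R_\frg^+-\sharp R_\frk^+}\ch_\frg(Y_\lambda)(\exp ty) = \sum_j \gamma_j(y)\, m_Y^j(\lambda),
\]
using the realization of $Y_\lambda$ as global sections of a twisted $\mathcal{D}$-module on the flag variety $\mathcal{B}$, the characteristic cycle $\sum_j m_Y^j(\lambda)\bigl[\overline{T^*_{\mathcal{O}^j}\mathcal{B}}\bigr]$ attached to $Y_\lambda$, and the restriction of the Schmid--Vilonen integral character formula to $T$. The second step is to combine this with Theorem \ref{ind=char} and clear the common rational factor $\prod_{\alpha\in R_\frk^+}\alpha(y)/\prod_{\alpha\in R_\frg^+}\alpha(y)$, obtaining
\[
Q_Y(\lambda) = \sum_j c_j(y)\, m_Y^j(\lambda)
\]
for rational functions $c_j(y)$ depending only on $j$. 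Since the left-hand side is independent of $y$ while the polynomials $m_Y^j(\lambda)$ span a nonzero subspace of $\sigma_K$ as $Y$ varies over modules attached to $\overline{\mathcal{O}_K}$, each $c_j(y)$ must in fact be a rational constant $c_j\in\bbQ$.

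Third, integrality of $c_j$ follows from the fact that both $Q_Y(\lambda)=\dim I(Y_\lambda)$ and $m_Y^j(\lambda)$ are $\bbZ$-valued on integrally dominant $\lambda$. Choosing a finite collection of modules $Y^{(1)},\dots,Y^{(N)}$ attached to $\overline{\mathcal{O}_K}$ whose multiplicity matrix $\bigl(m^j_{Y^{(i)}}(\lambda_i)\bigr)_{i,j}$ is invertible over $\bbQ$ with controlled denominator, and inverting the resulting linear system, forces $c_j\in\bbZ$; cohomologically induced $\mathcal{A}_\frq(\lambda)$-modules attached to $\mathcal{O}_K$ are natural candidates for such a generating family.

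The main obstacle is the first step: the compact-Cartan analogue of the Schmid--Vilonen leading-term formula. On the split Cartan this is a by-product of \cite{SV}; on the compact Cartan it requires redoing the moment-map/fixed-point computation relative to the $K_\bbC$-action on $\overline{\mathcal{O}_K}$, identifying the intrinsic orbital constants $\gamma_j(y)$, and checking that orbits in $\overline{\mathcal{O}_K}\setminus\mathcal{O}_K$ contribute at strictly lower order in $t$ (hence drop out of the leading term). A secondary obstacle is arranging the multiplicity matrix in the integrality step to have full rank; this should be tractable via $\mathcal{A}_\frq(\lambda)$-modules but is not automatic.
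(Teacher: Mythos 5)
The statement you set out to prove is labeled a \emph{conjecture} in the paper (Conjecture \ref{conj}); the paper does not give a proof. The authors explicitly say ``we conjecture'' and then only \emph{verify} the statement in two examples, $G=SL(2,\bR)$ (via the table) and $G=SU(1,n)$ (Example \ref{ex_su1n}, by reducing holomorphic $Y$ to generalized Verma modules and invoking \cite{NOT}), and they remark that already for more real forms of $\CO_K$ ``the full statement of Conjecture \ref{conj} is not so accessible.'' So there is no paper proof to compare yours against, and your proposal should be regarded as a proposed attack on an open problem, not a verification of a known argument.

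As an attack, it is a reasonable and well-informed sketch, but it has genuine gaps that you have honestly flagged and that would have to be filled before this is a proof. The central one is your Step 1: a compact-Cartan analogue of the Schmid--Vilonen leading-term formula, expressing $\lim_{t\to 0^+} t^{\sharp R_\frg^+ - \sharp R_\frk^+}\ch_\frg(Y_\lambda)(\exp ty)$ as $\sum_j \gamma_j(y)m^j_Y(\lambda)$ with $\gamma_j$ built from orbital Fourier transforms for the $K_\bC$-orbits in $\CO_K \cap (\frg/\frk)^*$. This is precisely the hard analytic-geometric input; you state it, you do not establish it, and it is not in the literature in the form you need. A second gap is your argument that the $c_j(y)$ must be $y$-independent: since every $m^j_Y$ is a scalar multiple $a^j_Y P_Y$ of the Goldie rank polynomial, for each fixed $Y$ the functions $m^1_Y,\dots,m^r_Y$ are linearly \emph{dependent} (all proportional). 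What you actually get from varying $Y$ is that $\sum_j c_j(y)a^j_Y$ is $y$-independent for each $Y$; to conclude that each $c_j(y)$ is constant you would need the vectors $(a^1_Y,\dots,a^r_Y)$ to span $\bbQ^r$ as $Y$ ranges over modules attached to $\overline{\CO_K}$, which you have not shown. Third, the integrality step is not a formal consequence of $\bbZ$-valuedness: even if you produce $Y^{(1)},\dots,Y^{(N)}$ whose multiplicity matrix is invertible over $\bbQ$, the inverse need not have integer entries, and $\bbZ$-valued left-hand sides plus a $\bbQ$-invertible system do not force $\bbZ$-coefficients (already $2 = 4\cdot 1 - 2\cdot 1$). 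You would need the relevant minor to be $\pm 1$, or a more structural argument such as the one the authors use for $SU(1,n)$ (pinning $c_1,c_2$ to $\pm1$ directly via holomorphic/antiholomorphic discrete series and \cite{NOT}). In short: the strategy is the right one to try, and it correctly parallels the known $\eqref{eq:multchar}$ on the split Cartan, but each of the three steps requires substantial new work, and none of them is currently proved.
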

\begin{ex} 
{\rm
Consider $G=SL(2,\bR)$ with $K=SO(2)$. Then $\sigma_{K}$ is the
trivial representation of $W_{\fg}\simeq\bZ/2\bZ$ and ${\mathcal
  O}_{K}$ is the principal nilpotent orbit.  
${\mathcal O}_{K}$ has two real forms ${\mathcal O}_{K}^{1}$ and
${\mathcal O}_{K}^{2}$. One checks from our computations in Example
\ref{exds_sl2} and from the table below that  
$c_{1}=1$ and $c_{2}=-1$. This shows that the conjecture is true in
the case when $G=SL(2,\bR)$. 
\vspace*{0.2cm}\\
\begin{center}
\begin{tabular}{|l|c|r|}
  \hline
$\hspace*{2.5cm}Y$ & ${\mathcal V}(Y)$ & $Q_{Y}$   \\
  \hline
  finite-dimensional modules & $\{0\}$ & $0$ \\
  \hline
 holomorphic discrete series & ${\mathcal O}_{K}^{1}$ & $1$  \\
  \hline
   antiholomorphic discrete series & ${\mathcal O}_{K}^{2}$ & $-1$  \\
   \hline
   principal series & ${\mathcal O}_{K}^{1}\cup {\mathcal O}_{K}^{2}$ & $0$  \\
   \hline
\end{tabular}
\end{center}
\vspace*{0.5cm}
Here ${\mathcal V}(Y)\subset {\mathcal V}(\gr(\Ann(Y)))$ is the associated variety of $Y$.
}
\end{ex}

\begin{ex}
\label{ex_su1n}
{\rm 
Let $n>1$ and let $G=SU(1,n)$ with $K=U(n)$. Then ${\mathcal O}_K$ is the minimal nilpotent orbit
of dimension $2n$. It has two real forms ${\mathcal O}_{K}^{1}$ and
${\mathcal O}_{K}^{2}$. The holomorphic and 
antiholomorphic discrete series representations $Y^1_\lambda$ and $Y^2_\lambda$ all have Gelfand-Kirillov dimension
equal to $n$. By \cite{Ch}, Corollary 2.13, the respective associated cycles are equal to
\[
\Ass(Y^i_\lambda)=m^i_{Y^i}(\lambda) {\mathcal O}_{K}^{i},\qquad i=1,2,
\]
with the multiplicity $m^i_{Y^i}(\lambda)$ equal to the dimension of the lowest $K$-type of $Y^i_\lambda$. 
The index of the holomorphic discrete series representations is the lowest $K$-type shifted by a one dimensional
representation of $K$ with weight $\rho(\frp^-)$, so it has the same dimension as the lowest $K$-type. The situation for
the antiholomorphic discrete series representations is analogous, but there is a minus sign. Hence
\[
m^i_{Y^i}(\lambda) = (-1)^{i-1}Q_{Y^i}(\lambda),\qquad i=1,2.
\]
This already forces the coefficients $c_1$ and $c_2$ from Conjecture \ref{conj} to be 1 and -1 respectively.

Since ${\mathcal O}_K$ is the minimal orbit, it follows that for infinite-dimensional $Y$,  
\[
{\mathcal V}(\gr(\Ann(Y))) \subseteq \overline{O_{K}}\quad\Rightarrow\quad {\mathcal V}(\gr(\Ann(Y))) = \overline{O_{K}}.
\] 

\medskip

If ${\mathcal V}(\gr(\Ann(Y))) = \overline{O_{K}}$ and $Y$ is irreducible, then $\caV(Y)$ must be either $\overline{\caO_K^1}$ or 
$\overline{\caO_K^2}$. This follows from minimality of $\caO_K$ and from \cite{V3}, Theorem 1.3. Namely, the codimension of the boundary of $\caO_K^i$ in $\overline{\caO_K^i}$ is $n\geq 2$.

On the other hand, by \cite{KO}, Lemma 3.5, $\caV(Y)=\overline{\caO_K^i}$ implies $Y$ is holomorphic if $i=1$, respectively antiholomorphic if $i=2$.
Let us assume $i=1$; the other case is analogous.

It is possible to write $Y$ as a $\bbZ$-linear combination of generalized Verma modules; see for example \cite{HPZ}, Proposition 3.6. 
So we see that it is enough to check the conjecture assuming $Y$ is a generalized Verma module. In this case, one easily computes that
$I(Y)$ is the lowest $K$-type of $Y$ shifted by the one dimensional $\Kt$-module with weight $\rho(\frp^-)$; see \cite{HPZ}, Lemma 3.2. So the index polynomial is the dimension of the lowest $K$-type. By \cite{NOT}, Proposition 2.1, this is exactly the same as the multiplicity $m^1_Y$ of
$\overline{\caO_K^1}$ in the associated cycle. This proves the conjecture in this case (with $c_1=1$). }

\end{ex}

Whenever $G$ is a simple group with a Hermitian symmetric space, the associated varieties ${\mathcal O}_K^1$ and ${\mathcal O}_K^2$ of holomorphic and antiholomorphic discrete series are real forms of a complex orbit ${\mathcal O}_K$ attached by the Springer correspondence to $\sigma_K$. The argument above proves Conjecture \ref{conj} for holomorphic and antiholomorphic representations. But in general there can be many more real forms of ${\mathcal O}_K$, and the full statement of Conjecture \ref{conj} is not so accessible.

\medskip
We mention that neither of the two assumptions (\ref{assum1})
and (\ref{assum2}) above is automatically 
fulfilled. Below, we list the classical groups for which the
assumption (\ref{assum1})  
is satisfied, i.e the classical groups for which $\sigma_{K}$ is a
Springer representation. 

To check whether $\sigma_K$ is a Springer representation, we proceed
as follows (see \cite{Car}, Chapters 11 and 13): 
\begin{itemize}
\item[(i)] we identify $\sigma_K$ as a Macdonald representation;
\item[(ii)] we compute the symbol of $\sigma_K$;
\item[(iii)] we write down the partition associated with this symbol;
\item[(iv)] we check whether the partition corresponds to a complex
  nilpotent orbit. 
\end{itemize} 
Recall that complex nilpotent orbits in classical Lie algebras are in
one-to-one correspondence with the set of partitions  
$\lbrack d_1,\cdots,d_k\rbrack$ with $d_1\geq d_2\geq\cdots\geq
d_k\geq 1$ such that (see \cite{CM}, Chapter 5): 
\begin{itemize}
\item[$\bullet$] $d_{1}+d_{2}+\cdots+d_{k}=n$, when
  $\frg\simeq\frs\frl(n,\bbC)$; 
\item[$\bullet$] $d_{1}+d_{2}+\cdots+d_{k}=2n+1$ and the even $d_j$
  occur with even multiplicity, when $\frg\simeq\frs\fro (2n+1,\bbC)$; 
\item[$\bullet$] $d_{1}+d_{2}+\cdots+d_{k}=2n$ and the odd $d_j$ occur
  with even multiplicity, when $\frg\simeq\frs\frp(2n,\bbC)$; 
\item[$\bullet$] $d_{1}+d_{2}+\cdots+d_{k}=2n$ and the even $d_j$
  occur with even multiplicity, when $\frg\simeq\frs\fro (2n,\bbC)$;
  except that the partitions having all the
  $d_j$ even and occurring with even multiplicity are each 
  associated to {\em two} orbits. 
\end{itemize}

For example, when $G=SU(p,q)$, with $q\geq p\geq 1$, the Weyl group
$W_\frg$ is the symmetric group $S_{p+q}$, and $W_\frk$ can be
identified with the subgroup $S_p\times S_q$. The representation
$\sigma_K$ is parametrized, as a Macdonald representation, by the
partition  
$\lbrack 2^p,1^{q-p}\rbrack$ (see \cite{M} or Proposition 11.4.1 in
\cite{Car}). This partition corresponds to a $2pq$-dimensional
nilpotent orbit,  
so $\sigma_K$ is Springer. Note that when $\frg$ is of type $A_n$,
there is no symbol to compute, and any irreducible representation of
$W_\frg$  is a Springer representation. 

When $G=SO_e(2p,2p+1)$, with $p\geq 1$, the group $W_\frk$ is
generated by a root subsystem of type $D_p\times B_p$. In this case,  
$\sigma_K$ is parametrized by the pair of partitions $(\lbrack
\alpha\rbrack,\lbrack\beta\rbrack)=(\lbrack
1^p\rbrack,\lbrack1^p\rbrack)$ and its symbol is the array  
\[ 
\begin{pmatrix}
0&&2&&3&&\cdots&&p+1\cr
&1&&2&&\cdots &&p&
\end{pmatrix}.
\]
(See \cite{L} or Proposition 11.4.2 in \cite{Car}.)
The partition of $4p+1$ associated with this symbol is $\lbrack
3,2^{2p-2},1^2\rbrack$. This partition corresponds to a
$2p(2p+1)$-dimensional nilpotent orbit, i.e., $\sigma_K$ is a Springer
representation.  

When $G=Sp(p,q;\bbR)$, with $q> p\geq 1$, the Weyl group $W_\frk$ is
generated by a root subsystem of type $C_p\times C_q$ so that
$\sigma_K$ is parametrized by the pair  
 of partitions $(\lbrack \alpha\rbrack,\lbrack\beta\rbrack)=(\lbrack
 \emptyset\rbrack,\lbrack2^p,1^{q-p}\rbrack)$. Its symbol is the array  
\[
\begin{pmatrix}
0&&1&&2&&\cdots&&q\cr
&1&&2&&\cdots &&q+1&
\end{pmatrix},
\]
where in the second line there is a jump from $q-p$ to $q-p+2$. (See
\cite{L} or Proposition 11.4.3 in \cite{Car}.) 
The partition of $2p+2q$ associated with this symbol is $\lbrack
3,2^{2p-2},1^{2(q-p)+1}\rbrack$. This partition  
does not correspond to a nilpotent orbit, i.e., $\sigma_K$ is not a
Springer representation.

 \scriptsize{\begin{table}[ht]
 \addtolength{\tabcolsep}{-6pt}
\centering 
\scalebox{0.81}{
\begin{tabular}{c c c c c} 
\hline\hline 
& & & &\\${\bf G}$ & {\bf Generator for $\sigma_{K}$}& {\bf Springer ?} & ${\bf {\mathcal O}_{K}}$ &  ${\bf \dim_{\bb C}({\mathcal O}_{K})}$\\[0.5ex]
& & & &\\
\hline\hline
& & & &\\
$SU(p,q)$, $q\geq p\geq 1$&\tiny{$\prod\limits_{\stackrel{1\leq i<j\leq
      p}{p+1\leq i<j\leq p+q}}(X_{i}-X_{j})$ for $p\geq 2$}
&\tiny{Yes}&\tiny{$\lbrack 2^p,1^{q-p}\rbrack$}&\tiny{$2pq$} \\[8ex] 
& $\prod\limits_{2\leq i<j\leq q+1}(X_{i}-X_{j})$ for $q\geq 2$, $p=1$ &  &
(minimal orbit if $p=1$)&  \\[5ex] 
&  $\sigma_{K}$ is trivial for $p=q=1$&  & (principal orbit if $p=q=1$)&  \\
& & & &\\ 
\hline\hline
& & & &\\
$SO_{e}(2p,2p+1)$, $p\geq 1$& $\prod\limits_{\stackrel{1\leq i<j\leq p}{p+1\leq
    i<j\leq 2p}}(X_{i}^{2}-X_{j}^{2})\prod\limits_{p+1\leq i\leq 2p}X_{i}$ for
$p\geq 2$&Yes &$\lbrack 3,2^{2p-2},1^{2}\rbrack$&  
$2p(2p+1)$ \\[8ex] 
& $X_{2}$ for $p=1$& &(subregular orbit if $p=1$) &  \\ 
& & & &\\
\hline\hline
& & & &\\
$SO_{e}(2p,2p-1)$, $p\geq 1$\;\;\;\;\;\;&
$\prod\limits_{\stackrel{1\leq   i<j\leq p}{p+1\leq i<j\leq 
    2p-1}}(X_{i}^{2}-X_{j}^{2})\prod\limits_{i=p+1}^{2p-1}
X_{i}$ for $p\geq 2$&Yes &$\lbrack 3,2^{2p-2}\rbrack$&  
$2p(2p-1)$ \\  
& $\sigma_{K}$ is trivial for $p=1$& &(principal orbit if $p=1$)&  \\ 
& & & &\\
\hline\hline
& & & &\\
$SO_{e}(2,2q+1)$, $q\geq 2$& $\prod\limits_{2\leq i<j\leq
  q+1}(X_{i}^{2}-X_{j}^{2})\prod\limits_{i=2}^{q+1} 
X_{i}$&Yes &$\lbrack
3,1^{2q}\rbrack$&  
$2(2q+1)$ \\ 
& & & &  \\ 
& & & &\\
\hline\hline
& & & &\\
$SO_{e}(2p,2q+1)$ &$\prod\limits_{\stackrel{1\leq i<j\leq p}{p+1\leq i<j\leq
    p+q}}(X_{i}^{2}-X_{j}^{2})\prod\limits_{i=p+1}^{p+q}X_{i}$ &Yes &
$\lbrack 3,2^{2p-2},1^{2(q-p)+2}\rbrack$ &$2p(2q+1)$ \\  
$q\geq p+1\geq 3$& & &  &  \\ 
& & & &\\
\hline\hline
& & & &\\
$SO_{e}(2p,2q+1)$ &$\prod\limits_{\stackrel{1\leq i<j\leq p}{p+1\leq i<j\leq
    p+q}}(X_{i}^{2}-X_{j}^{2})\prod\limits_{i=p+1}^{p+q} 
X_{i}$ for $q\geq 2$&No & {\huge -} &{\huge -} \\[8ex]  
$p\geq q+2\geq 2$ & $X_{p+1}\prod\limits_{1\leq i<j\leq
  p}(X_{i}^{2}-X_{j}^{2})$ for $q=1$& & &  \\[5ex] 
& $\prod\limits_{1\leq i<j\leq p}(X_{i}^{2}-X_{j}^{2})$ for $q=0$& & &\\ 
& & & &\\
\hline\hline
& & & &\\
$Sp(2n,\bb R)$, $n\geq 1$ &$\prod\limits_{1\leq i<j\leq n}(X_{i}-X_{j})$ for
$n\geq 2$& Yes & $\lbrack 2^n\rbrack$&$n(n+1)$  \\[5ex]  
&  $\sigma_{K}$ is trivial for $n=1$&  & (principal orbit if $n=1$)&  \\ 
& & & &\\
\hline\hline
& & & &\\
$Sp(p,q;\bb R)$, $q\geq p\geq 1$ & \tiny{$\prod\limits_{\stackrel{1\leq i<j\leq
      p}{p+1\leq i<j\leq
      p+q}}(X_{i}^{2}-X_{j}^{2})\prod\limits_{i=1}^{p+q} 
X_{i}$ for $p\geq 2$}  & No & {\huge -} &  {\huge -} \\[8ex]  
& $\prod\limits_{2\leq i<j\leq q+1}(X_{i}^{2}-X_{j}^{2})\prod\limits_{1\leq i\leq
  q+1}X_{i}$ for $q\geq 2$, $p=1$&   &  \\[5ex]  
&  $X_{1}X_{2}$ for $p=q=1$ &  &&  \\ 
& & & &\\ 
\hline\hline 
& & & &\\
$SO_{e}(2p,2q)$, $q\geq p\geq 1$ & $\prod\limits_{\stackrel{1\leq i<j\leq
    p}{p+1\leq i<j\leq p+q}}(X_{i}^{2}-X_{j}^{2})$ for $p\geq 2$& Yes
&$\lbrack 3,2^{2p-2},1^{2(q-p)+1}\rbrack$&$4pq$ \\[8ex]  
& $\prod\limits_{2\leq i<j\leq q+1}(X_{i}^{2}-X_{j}^{2})$ for $q\geq
2$, $p=1$ &  & &  \\[5ex] 
& $\sigma_{K}$ is trivial for $p=q=1$& & (principal orbit for $p=q=1$) &   \\ 
& & & &\\ 
\hline\hline
& & & &\\
$SO^\star(2n)$, $n\geq 1$ & $\prod\limits_{1\leq i<j\leq n}(X_{i}-X_{j})$ for
$n\geq 2$& Yes & $\lbrack 2^{n}\rbrack$ for $n$ even & $n(n-1)$ \\[5ex]
&   $\sigma_{K}$ is trivial for $n=1$ &  & $\lbrack
2^{n-1},1^{2}\rbrack$ for $n$ odd &  \\  
&  & &   (trivial orbit if $n=1$)&  \\ 
&  &  & (minimal orbit if $n=3$)&  \\ [1ex]
\hline\hline\end{tabular} }\label{tableSpringer}
\end{table}}

\normalsize  
\clearpage
The following theorem provides a sufficient condition for
  both assumptions (\ref{assum1}) and (\ref{assum2}) to hold.  
In contrast with the previous table, it includes exceptional groups.
\begin{thm}\label{OK}
Suppose G is connected semisimple, $T$ is a compact Cartan subgroup in
$G$ contained in $K$, and $\lambda_0$ is the Harish-Chandra  
parameter for a discrete series representation $Y_0$ of
$G$. Assume that the set of integral roots for $\lambda_0$ is precisely  
the set of compact roots, i.e., 
\begin{equation}\label{exception}
\{\alpha \in \Delta(\frg,\frt)|\lambda_0(\alpha^\vee) \in {\mathbb Z} \}=\Delta(\frk,\frt).
\end{equation}
Then $\sigma_K$ is the Springer representation for a complex nilpotent
orbit ${\mathcal O}_K$.  
Let $\{Y_{\lambda_0+\mu} | \mu\in\Lambda\}$ be the Hecht-Schmid
coherent family of virtual representations corresponding  
to $Y_0$ and form the virtual representation
$$Y \defn \sum_{w \in W_\frk} (-1)^w Y_{w\lambda}.$$
Then $Y$ is a nonzero integer combination of irreducible representations
having associated variety of annihilator equal to $\overline{{\mathcal
O}_K}$. 
\end{thm}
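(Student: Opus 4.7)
The strategy is to identify $\sigma_K$ with the Joseph/Goldie rank representation attached to the discrete series $Y_0$, and then to invoke Barbasch--Vogan to produce the Springer orbit. First I would simplify $Y$. For the Hecht--Schmid coherent family of a discrete series one has
\beu
Y_{w\lambda_0} \;=\; \sgn(w)\, Y_0 \qquad (w \in W_\frk),
\end{equation*}
visible from the Schmid character formula: on the regular elliptic set, the numerator of $\ch_\frg(Y_\lambda)$ is $\sgn$-antisymmetric under $W_\frk$. Consequently
\beu
Y \;=\; \sum_{w \in W_\frk} \sgn(w)\, Y_{w\lambda_0} \;=\; |W_\frk|\, Y_0,
\end{equation*}
a nonzero positive integer multiple of the single irreducible $Y_0$. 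In particular $\Ann(Y)=\Ann(Y_0)$, and the remaining task is to show that $\caV(\gr(\Ann(Y_0)))=\overline{\mathcal O_K}$ for a nilpotent orbit $\mathcal O_K$ whose Springer representation is $\sigma_K$.

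Next I would pin down the Goldie rank polynomial $P_{Y_0}$. Since $Y_0$ is a discrete series, $\Dim(Y_0)=|R_\frp^+|$, so $P_{Y_0}$ is a nonzero $W_\frg$-harmonic polynomial on $\frt^*$ of degree $|R_\frg^+|-|R_\frp^+|=|R_\frk^+|$, matching the degree of the polynomial $\prod_{\alpha\in R_\frk^+}\alpha$ that generates $\sigma_K$. The $\tau$-invariant of a discrete series consists of those simple integral roots that are compact; under hypothesis \eqref{exception} the integral root system \emph{is} $R_\frk$, so every simple integral root is compact, making the $\tau$-invariant of $Y_0$ the \emph{full} set of simple roots of the integral root system. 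These simple roots generate all of $R_\frk$. Vogan's divisibility theorem (\cite{V1}, Proposition 4.9; cf.\ Proposition \ref{tau}) then forces
\beu
P_{Y_0} \;=\; c\,\prod_{\alpha\in R_\frk^+}\alpha
\end{equation*}
for some nonzero scalar $c$, since the right-hand side divides $P_{Y_0}$ and both have the same degree. Up to this scalar, $P_{Y_0}$ is the polynomial generating $\sigma_K$ (and also, by \eqref{indexds}, the index polynomial $Q_{Y_0}$), so the $W_\frg$-representation generated by $P_{Y_0}$ is exactly $\sigma_K$.

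The Barbasch--Vogan theorem (\cite{BV1}) now says that the $W_\frg$-representation generated by the Goldie rank polynomial of an irreducible Harish-Chandra module is precisely the Springer representation of the nilpotent orbit $\caV(\gr(\Ann))$. Setting $\mathcal O_K:=\caV(\gr(\Ann(Y_0)))$ and combining with the previous step yields both conclusions simultaneously: $\sigma_K$ is the Springer representation attached to $\mathcal O_K$, and $Y=|W_\frk|\, Y_0$ is a nonzero integer combination of irreducibles whose annihilator has associated variety $\overline{\mathcal O_K}$.

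The main obstacle is the $\tau$-invariant analysis in the middle step. I must check that hypothesis \eqref{exception} really does promote the $\tau$-invariant of $Y_0$ to \emph{every} simple root of the integral root system (not merely a proper subset), so that the sub-root-system they generate exhausts $R_\frk$ and the degree-matching argument pins $P_{Y_0}$ down exactly. This should follow from the standard computation of the $\tau$-invariant of a discrete series (Section 4 of \cite{V1}) specialized to the present non-fully-integral setting.
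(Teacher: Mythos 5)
The central step in your proposal — the simplification
\beu
Y \;=\; \sum_{w\in W_\frk}\sgn(w)\,Y_{w\lambda_0} \;=\; |W_\frk|\,Y_0
\end{equation*}
— is false in general, and this is a genuine gap that unravels the rest of the argument. The identity $Y_{w\lambda_0}=\sgn(w)\,Y_0$ would follow from the coherent-continuation formula $s_\alpha\cdot X = -X$ (valid whenever $\alpha\in\tau(X)$), so what you really need is that \emph{every simple integral root lies in $\tau(Y_0)$}. You assert this by saying that ``the $\tau$-invariant of a discrete series consists of those simple integral roots that are compact,'' but that statement misquotes Hecht--Schmid. Their result (\cite{HS}, Proposition 3.6, cited just before Proposition \ref{tau} in the paper) says that $\tau(Y_0)$ consists of the \emph{compact members of $\Psi$}, where $\Psi$ is the simple root system of $R_\frg^+(\lambda_0)$ — i.e., the roots that are simple \emph{for the $\frg$-chamber of $\lambda_0$}, not merely simple for the integral root system. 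Under hypothesis \eqref{exception} the integral root system is $R_\frk$, but a simple root of $R_\frk^+$ need not be simple in $R_\frg^+(\lambda_0)$. When it isn't, translation to that wall does \emph{not} kill $Y_0$ (one gets a nonzero limit of discrete series, not zero), so the root is \emph{not} in $\tau(Y_0)$.

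Concretely, take a finite cover of $SU(2,1)$ and $\lambda_0$ in the ``middle'' chamber $D_1$ ($\lambda_1\ge\lambda_3\ge\lambda_2$) satisfying \eqref{exception}. The unique simple integral root $e_1-e_2$ is compact, but the $R_\frg^+(\lambda_0)$-simple roots are $e_1-e_3$ and $e_3-e_2$, both noncompact; so $\tau(Y_0)=\emptyset$. Accordingly $\Dim(Y_0)=|R_\frg^+|=3$, the annihilator is the \emph{minimal} primitive ideal (associated variety the full nilpotent cone, not $\overline{\mathcal O_K}=\overline{[2,1]}$), and $Y_{s_\alpha\lambda_0}\ne -Y_0$. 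The justification via ``the numerator of $\ch_\frg(Y_\lambda)$ is $\sgn$-antisymmetric under $W_\frk$'' is not enough, since agreement of characters on the compact Cartan does not determine a virtual module; on the other Cartan subgroups the characters differ.

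This example also shows why your subsequent steps fail: $\Ann(Y_0)$ is not the maximal primitive ideal, $P_{Y_0}$ is a constant (degree $0$), and it does not generate $\sigma_K$. The paper's proof sidesteps all of this by never claiming anything about $Y_0$ itself. It only observes that the \emph{virtual module} $Y$ transforms by $\sgn$ under $W_\frk$ by construction, which forces (by general $\tau$-invariant theory, with no input from the discrete-series structure) that every irreducible \emph{constituent of $Y$} has full $\tau$-invariant. In the $SU(2,1)$ middle-chamber example, $Y_0$ appears with coefficient $0$ in $Y$ — the surviving constituents are the composition factors of the ``middle'' of $\phi_\alpha\psi_\alpha Y_0$, each with $\alpha$ in its $\tau$-invariant — so the conclusion of the theorem is about modules quite different from $Y_0$. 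Your approach correctly handles the case where $\lambda_0$ lies in a chamber whose compact simple roots exhaust the simple integral roots (holomorphic/antiholomorphic type), but that is a proper special case of the theorem's hypothesis.
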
 
\begin{proof}
The character of $Y$ on the compact Cartan $T$ is a multiple (by the
cardinality of $W_\frk$) of the character of $Y_0$. Consequently the
character of $Y$ on $T$ is not zero, so $Y$ is not zero.  By
construction the virtual representation $Y$ transforms under the
coherent continuation action of the integral Weyl group $W(\lambda_0)
= W_\frk$ by the sign character of $W(\lambda_0)$.  By the theory of
$\tau$-invariants of Harish-Chandra modules, it follows that every
irreducible constituent of $Y$ must have every simple integral root in
its $\tau$-invariant.

At any regular infinitesimal character $\lambda_0$ there is a unique
maximal primitive ideal $J(\lambda_0)$, characterized by having every
simple integral root in its $\tau$ invariant. The Goldie rank
polynomial for this ideal is a multiple of 
\[
q_0(\lambda) = \prod_{\langle\alpha^\vee,\lambda_0 \rangle \in \bbN}
\langle \alpha^\vee,\lambda\rangle;
\]
so the Goldie rank polynomial for every irreducible constituent of $Y$
is a multiple of $q_0$.  The Weyl group representation generated by
$q_0$ is $\sigma_K$ (see \eqref{Korbit}); so by \cite{BV1}, it follows
that the complex nilpotent orbit ${\mathcal O}_0$ attached to the
maximal primitive ideal $J_0$ must correspond to $\sigma_K$ as in
\eqref{assum1}. At the same time, we have seen that the (nonempty!)
set of irreducible constituents of the virtual representation $Y$ all
satisfy \eqref{assum2}.

\end{proof} 

Theorem \ref{OK} applies to any real form of $E_6$, $E_7$ and $E_8$,
and more generally to any equal rank real form of one root length. It
applies as well to $G_2$ (both split and compact forms. The theorem
applies to compact forms for any $G$, and in that case ${\mathcal O}_K
={0}$).  However, for the split $F_4$ and taking $\lambda_0$ a
discrete series parameter for the nonlinear double cover, the integral
root system (type $C_4$) strictly contains the compact roots (type
$C_3 \times C_1$).  So the above theorem does not apply to split
$F_4$. Nevertheless the representation $\sigma_K$ does correspond to a
(special) nilpotent orbit ${\mathcal O}_K$. At regular integral
infinitesimal character, there are (according to the
representation-theoretic software {\tt atlas}; see \cite{atlas})
exactly $27$ choices for an irreducible representation $Y$ as in
(\ref{assum2}).  There are two real forms of the orbit ${\mathcal
  O}_K$. The $Y$'s come in three families (``two-sided cells") of nine
representations each, with essentially the same associated variety in
each family. One of the three families contains an $A_\frq(\lambda)$
(with Levi of type $B_3$) and therefore has associated variety equal
to one of the two real forms. In particular, the condition
(\ref{exception}) is sufficient but not necessary for assumptions
(\ref{assum1}) and (\ref{assum2}) to hold. Note that for rank one
$F_4$, the representation $\sigma_K$ is not in the image of the
Springer correspondence.

For the classical groups, Theorem \ref{OK} applies to all the cases of
one root length, explaining all the ``yes'' answers in Table
\ref{tableSpringer} for types $A$ and $D$. In the case of two root
lengths, the hypothesis of Theorem \ref{OK} can be satisfied in the
noncompact case exactly when $G$ is Hermitian symmetric (so the cases
$SO_e(2,2n-1)$ and $Sp(2n,\bbR)$; more precisely, for appropriate
nonlinear coverings of these groups).  

We do not know a simple general explanation for the remaining ``yes''
answers in the table. Just as for $F_4$, the integral root systems for
a discrete series parameter $\lambda_0$ are too large for Theorem
\ref{OK}: in the case of $SO_e(2p,2q+1)$, for example, the root system
for $K$ is $D_p\times B_q$, but (for $p\ge 2$) the integral root system cannot be
made smaller than $B_p \times B_q$.

\end{document}